\newcommand{\vd}{\mathrm{d}}
\theoremstyle{definition}
\newtheorem{teo}{Theorem}
\newtheorem{lem}[teo]{Lemma}
\newtheorem{rem}[teo]{Remark}
\journal{Nuclear Physics B}
\begin{document}
\begin{frontmatter}
 \author[first]{Ubong Sam IDIONG\corref{cor1}}
 \affiliation[first]{organization={Department of Mathematics, Adeyemi Federal University of Education},
            address line ={143, Ondo-Ore Road},
            city={Ondo City},
           postcode={351103},
            state ={Ondo},
            country ={Nigeria}}
            \ead{usidiong@gmail.com,idiongus@aceondo.edu.ng}

\author[second]{Unanaowo Nyong BASSEY}
\affiliation[second]{organization={Department of Mathematics,},
 address line={University of Ibadan, Ibadan,  Nigeria},
           city={Ibadan},
       postcode={200005},
          state={Oyo State},
          country={NIGERIA}}
\ead{unbassey@yahoo.com}


\title{Spectral Shift Functions of Lam\'e Operators}

\begin{abstract}
The search for spectral shift functions of operators remains an open area of research. In this paper, the Kre\u{\i}n's spectral shift functions are computed for the Lam\'e operator in the Weierstrass form and the Brioschi-Halphen operator through Green functions obtained by applying the technique of Fourier transform of distributions.
\end{abstract}

\begin{keyword}
Distributions\sep Gel'fand triple action\sep Generalized Function\sep Green functions


\MSC[2020] 46C20\sep 34M45

\end{keyword}

\end{frontmatter}




\section{Introduction}
\label{introduction}
The expression gives the Lam\'e operator in the Weierstrass form
\begin{equation}\label{Lameop}
L_{s}:=-\;\frac{\vd^{2}}{\vd u^{2}}+s(s+1)\wp(u|g_{2},g_{3}), \qquad s\in\mathbb{R},
\end{equation}
where $\wp$ is the Weierstrass elliptic $\wp$-function that satisfies the polynomial equation $(\wp'(u))^{2}=4\wp(u)^{3}-g_{2}\wp(u)-g_{3}$, with $g_{2},g_{3}$ being invariant constants.
It is known by the works of Ince~\cite{INC} that the Lam\'e operator (considered on the real line
shifted by the imaginary half-periods $\omega_{k}(k=1,2,3)$ with $\sum_{k=1}^{3}\omega_{j}=0$ for integer $s$ has the property that its spectrum has exactly $s\in\mathbb{N}$ gaps which are such that $L_{s}:\mathscr{H}\rightarrow\mathscr{H}$ such that $\mathscr{H}:=L^{2}((0,\dfrac{\pi}{r}),\vd u)\subset \mathscr{C}_{c}^{\infty}(\Omega),\Omega=\mathbb{CP}^{1}\setminus\{e_{i}=\wp(\omega_{i})|i=1,2,3\}$ (see \cite{SNMR}, Eq.(20), p.426). The ends of the spectrum $E_{j}$ are given by the zeros of certain polynomials
\begin{equation}\label{sppol}
R_{2s+1}(E) = \prod_{i=0}^{2s}(E - E_{i}(s))=E^{2s+1}+b_{1}E^{2s}+ b_{2}E^{2s-1}+\ldots +b_{2s+1},
\end{equation}
with $b_{s}$ being functions of $s$. Here and hereafter, $E$ denotes the energy variable corresponding to the operator $L_{s}$ in \eqref{Lameop}. The polynomials given in \eqref{sppol} are called \emph{Lam\'e spectral polynomials} and were first studied by Hermite and Halphen (see~\cite{GV}, pp. 635-636 ).
The Brioschi-Halphen operator obtained by a two-step transformation of the Lam\'e operator \eqref{Lameop}  by setting $\psi=[\wp'(\frac{1}{2}u)]^{-s}\varphi$ and $w=\wp(u)$ is given as
\begin{equation}\label{bh}
H_{s}=4\prod_{i=1}^{3}(w-e_i)D^2-(2s-1)(6w^2-\frac{1}{2}g_2)D+4s(2s-1)w-4B,\;\;(D=\frac{\vd}{\vd w}),
\end{equation}
where $B$ is a constant, $s$ is restricted to be positive integer and the invariant constants $g_2$ and $g_3$ are given  by
\begin{equation}\label{invc}
g_2=4(e_1e_2+e_2e_3+e_1e_3)\hspace{.5cm} \textrm{and}\hspace{.5cm} g_3=4e_1e_2e_3
\end{equation}
where $e_j=\wp(\omega_j), j=1,2,3$ and each $\omega_j$ is the half-period of the Weierstrass elliptic
$\wp$-function.

In what follows, $\mathfrak{s}\ell(2,\mathbb{C})$ denotes the Lie algebra of complex two-by-two traceless matrices. It is also realised as a Lie algebra of differential operators generated by
\begin{equation}\label{gen}
\mathcal{J}_{+}:=w^{2}\frac{\vd}{\vd w}-2jw,\;\;\;\;\mathcal{J}_{0}:=w\frac{\vd}{\vd w}-j,\;\;\;\;
\mathcal{J}_{-}:=\frac{\vd}{\vd w}
\end{equation}
which obey the commutation relations
\begin{equation*}
[\mathcal{J}_{0},\mathcal{J}_{+}]=\mathcal{J}_{+},\;\;\;[\mathcal{J}_{-},\mathcal{J}_{+}]=2\mathcal{J}_{0},\;\;\;[\mathcal{J}_{-},\mathcal{J}_{0}]=\mathcal{J}_{-}\;\;.
\end{equation*}
(cf: \cite{TAV}, p.469).
The quantum Euler top operator obtained by $\mathfrak{s}\ell (2, \mathbb{C})$-algebraisation of Brioschi-Halphen operator is
\begin{equation}\label{rep}
\mathbf{H}_{s}:=4\mathcal{J}_{+}^2-g_2\mathcal{J}_{0}^2-\frac{g_3}{2}\mathcal{J}_{-}\mathcal{J}_{0}-\frac{3j-1}{4}g_2\mathcal{J}_{0}-\frac{3}{16}(3j-1)^{2}g_2\mathcal{J}_{-}
\end{equation}
with $j=\frac{3s}{2}$ being the spin. The operator $\mathbf{H}_{s}$ is found to be a Casimir operator with eigenvalues $E_{j}$ given as
$$E_{j}=\frac{j(j+1)g_{2}}{4}+\frac{3j(3j-1)}{4}.$$
Now, the characteristic polynomial of $\mathbf{H}_{s}$ coincides here with the Lam\'e spectral polynomial
\begin{equation}\label{det}
\det(E-\mathbf{H}_{s})=R_{2s+1}(E)=\prod_{j=0}^{2s}\left(E-\frac{j(j+1)g_{2}}{4}-\frac{3j(3j-1)}{4}\right).
\end{equation}
It is well known that the spectral shift function (SSF) is a function that calculates spectral flow and is associated with the spectra of operators. It is a key object in scattering theory and has applications in spectral theory, quantum mechanics, and statistical mechanics. The relationship between the determinant and the trace of the operator under consideration will be required in the development of the SSFs presented below. For an operator $X$ defined on a finite-dimensional space, the trace $Tr(X)$ and the determinant $\det(\exp(X)$ are related by the formula
\begin{equation}\label{trx}
Tr(X)=\ln(\det(\exp(X))).
\end{equation}
Similarly, the trace of the operator \eqref{det} can be written as
\begin{eqnarray*}
	Tr(\ln (E-\mathbf{H}_{s}))&=&\ln \det (\exp(\ln(E-\mathbf{H}_{s}))\\
	&=&\sum_{j=0}^{2s}\ln\left(E-\frac{j(j+1)g_{2}}{4}-\frac{3j(3j-1)}{4}\right).
\end{eqnarray*}
We note that when $s=1$ then $E=\wp(\varepsilon)$. In this case, we have
$$R_{3}(\wp(\varepsilon))=Tr(\ln (\wp(\varepsilon)-\mathbf{H}_{2}))=\sum_{j=0}^{2}\ln\left(\wp(\varepsilon)-\frac{j(j+1)g_{2}}{4}-\frac{3j(3j-1)}{4}\right).$$
Here, $\varepsilon$ is the pullback of the Weierstrass elliptic function given by the expression
$$\varepsilon=\wp^{-1}(E):=\int_{E}^{\infty}\frac{\vd w}{\sqrt{4w^{3}-g_{2}w-g_{3}}}.$$
The outline of the paper includes: Section~\ref{prel} contains essential preliminaries, Section\ref{sec5} contains the main results of this paper and Section~\ref{conc} contains the conclusion and summary of the paper.
\section{Preliminaries}\label{prel}
Let $\mathscr{B}(\mathscr{H})$ denote the space of all bounded linear operators defined on a Hilbert space $\mathscr{H}$, let $T\in\mathscr{B}(\mathscr{H})$ and let $R_{z}(T):=(z-T)^{-1}, z\in\mathbb{C}$ be the resolvent of the operator $T$. Following Kato (\cite{KT}, p. 44), assume $\varphi(\lambda)$ is an analytic ($=$holomorphic) function in a domain $\mathfrak{D}\subset\mathbb{C}$ containing all eigenvalues $\lambda_{n}$ of $T$ and let $\Gamma\subset \mathfrak{D}$ be a simple closed smooth curve with positive direction enclosing all the $\lambda_{n}$ in its interior. Then $\varphi(T)$ is defined by the Dunford-Taylor integral (Dunford~\cite{ND})
\begin{equation}\label{rcc1}
\varphi(T)=-\frac{1}{2\pi i}\int_{\Gamma}\varphi(\lambda)R_{\lambda}(T)\vd\lambda=\frac{1}{2\pi i}\int_{\Gamma}\varphi(\lambda)(\lambda-T)^{-1}\vd\lambda.
\end{equation}
This is an analogue of the Cauchy integral formula in function theory [see Knopp~\cite{KK}, p.61]. More generally, $\Gamma$ may consist of several simple closed curves $\Gamma_n$ with interiors $\mathfrak{D}_{n}'$ such that the union of the $\mathfrak{D}_n'$ contains all the eigenvalues of $T$.

The resolvent set of $T$ is defined by $$\rho(T):=\{z|R_{z}(T)=(z-T)^{-1}\}.$$ Let $\sigma(T)$ denote the spectrum of  $T$ which is the defined as $\sigma(T):=\mathbb{C}\setminus \rho(T)$. An operator $T\in\mathscr{B}(\mathscr{H})$ is said to be compact if it takes a bounded sequence $(x_{n})$ to a sequence $(Tx_{n})$ with a convergent subsequence. The spaces $\mathscr{B}_{1}\mathscr{(H)}$ and $\mathscr{B}_{2}\mathscr{(H)}$ shall denote the set of trace-class and Hilbert-Schmidt operators respectively. We shall often have $H$ and $H_{0}$ as a pair of self-adjoint operators in with $\sigma(H), \sigma(H_{0})$ their spectra; $\rho(H), \rho(H_{0})$ their resolvent sets with $\mathcal{R}(z)$ and $\mathcal{R}_{0}$ their resolvents and $E_{\lambda}, E_{\lambda}^{0}$ the associated spectral families. The symbols $\|\cdot\|, \|\cdot\|_{1}, $ and $\|\cdot\|_{2}$ will denote operator norm, trace norm and Hilbert-Schmidt norm respectively, while $\mathrm{Tr} (B)$ will stand for the trace of a trace-class operator $B$. The functions of operators as defined by Kato shall be stated in what follows.

In what follows, the integration over the Riemann sphere which is the domain of the Lam\'e operator is discussed. This will assist us in the study of the compact nature of the Lam\'e integral operator. In this quest, we obtain a suitable measure for the integral relation. One of the important measures that exist for the Reimann sphere $\mathbb{CP}^{1}$ is the area measure of 1-form. In local coordinates,
\begin{eqnarray*}
  \vd z\wedge \vd \overline{z} &=& (\vd x+i\vd y)\wedge (\vd x-i\vd y) \\
  &=&\left|\frac{\partial(\vd z,\vd\overline{z})}{\partial(\vd x,\vd y)}\right|\vd x\vd y\\
  &=&\left|
       \begin{array}{cc}
         1 & i \\
         1 & -i \\
       \end{array}
     \right|\vd x\vd y
  \\
  &=&  -2i\vd x\vd y.
\end{eqnarray*}
Following Royden (\cite{HLR}, \S4, p.47), let $f\in\mathscr{C}^{1}(\mathbb{CP}^{1})$ be a piecewise smooth function over nets $\{\Omega_{n}\}_{n\in\mathbb{N}}$ in $\mathbb{CP}^{1}$ . Then if $w=\vd f$, one obtains the norm
$$0\leqslant \|w\|^2=\frac{i}{2}\int_{\Omega_{n}} w\wedge \overline{w}=\int_{\Omega_{n}}|w|^{2}\vd x\vd y<\infty. $$
It is also known (\cite{McM}, Theorem 7.1, p.72) that for any function $g\in\mathscr{C}_{c}^{\infty}(\mathbb{C}),$ a solution to the equation $\frac{\vd f}{\vd \overline{z}}=g$ is given by the convolution
$$f(z)=g\ast \frac{1}{\pi z}=\frac{i}{2\pi}\int_{\mathbb{C}}\frac{g(w)}{z-w}\vd w \wedge\vd\overline{w}.$$
\begin{teo}[\cite{KUT}, \S 9.10, p. 264] Let $f:X\times Y\longrightarrow [0,\infty]$ be measurable with respect to the $\sigma$-algebra $\sigma(\mathcal{K})$ and let $\mu\times \nu$ the product measure so that
\begin{equation}\label{d2}
(\mu \times \nu)(E)=\int_{Y} \int_{X} \chi_{E} \vd \mu\vd \nu=\int_{X} \int_{Y} \chi_{E} \vd \nu\vd \mu,
\end{equation}
where $\mu$ and $\nu$ are the finite measure on the the measurable spaces $(X,\mathcal{F})$ and $(Y,\mathcal{S})$ respectively. Then
\begin{equation}\label{d3}
\int_{X\times Y}f \vd(\mu \times \nu)=\int_{Y} \int_{X} f \vd \mu\vd \nu=\int_{X} \int_{Y} f\vd \nu\vd \mu,
\end{equation}
where $\mathcal{K}$ is the family of compact sets.
\end{teo}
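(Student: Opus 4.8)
\section*{Proof proposal for the final statement (Tonelli-type theorem)}

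The plan is to run the standard bootstrap argument: the identity is known for indicator functions (it is precisely the characterisation \eqref{d2} of the product measure assumed in the statement), one promotes it to non-negative simple functions by linearity of the integral, and then to an arbitrary non-negative measurable $f$ by monotone approximation. Because $\mu$ and $\nu$ are \emph{finite}, no $\sigma$-finiteness bookkeeping is required and each step below goes through cleanly.

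First, a remark on \eqref{d2} itself, which underlies everything. For a measurable rectangle $E=A\times B$ with $A\in\mathcal{F}$, $B\in\mathcal{S}$, one has $\chi_E(x,y)=\chi_A(x)\chi_B(y)$, so the inner integrals in \eqref{d2} collapse to $\nu(B)\chi_A$ and $\mu(A)\chi_B$, each measurable, and both iterated integrals equal $\mu(A)\nu(B)=(\mu\times\nu)(E)$. The rectangles form a $\pi$-system generating $\sigma(\mathcal{K})$, and the collection $\mathcal{M}$ of all $E\in\sigma(\mathcal{K})$ for which the section maps $x\mapsto\nu(E_x)$ and $y\mapsto\mu(E^y)$ are measurable and the two iterated integrals of $\chi_E$ agree with $(\mu\times\nu)(E)$ is a $\lambda$-system: closure under proper differences uses finiteness of the measures, and closure under increasing countable unions uses the monotone convergence theorem. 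By Dynkin's $\pi$-$\lambda$ theorem, $\mathcal{M}=\sigma(\mathcal{K})$, which is exactly \eqref{d2}.

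Linearity of the integral now upgrades \eqref{d2} to \eqref{d3} for every non-negative $\sigma(\mathcal{K})$-measurable \emph{simple} function $s$, and in particular shows that for such $s$ the maps $x\mapsto\int_Y s(x,\cdot)\,\vd\nu$ and $y\mapsto\int_X s(\cdot,y)\,\vd\mu$ are measurable. For a general measurable $f:X\times Y\to[0,\infty]$, choose simple functions $0\le s_1\le s_2\le\cdots$ with $s_n\uparrow f$ pointwise. Monotone convergence on $X\times Y$ gives $\int s_n\,\vd(\mu\times\nu)\uparrow\int f\,\vd(\mu\times\nu)$; for each fixed $x$, monotone convergence on $(Y,\mathcal{S},\nu)$ gives $\int_Y s_n(x,\cdot)\,\vd\nu\uparrow\int_Y f(x,\cdot)\,\vd\nu$, so $x\mapsto\int_Y f(x,\cdot)\,\vd\nu$ is a pointwise limit of measurable maps and hence measurable; a third application of monotone convergence, now on $(X,\mathcal{F},\mu)$, gives $\int_X\int_Y s_n\,\vd\nu\,\vd\mu\uparrow\int_X\int_Y f\,\vd\nu\,\vd\mu$. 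Since the three monotone sequences agree term by term by the simple-function case, their limits agree, yielding $\int f\,\vd(\mu\times\nu)=\int_X\int_Y f\,\vd\nu\,\vd\mu$; the symmetric argument gives the remaining equality in \eqref{d3}.

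I expect the main obstacle to be the bookkeeping around measurability and finiteness rather than any deep idea: one must verify that the family $\mathcal{M}$ above is genuinely closed under proper differences, and it is exactly here that finiteness of $\mu$ and $\nu$ is indispensable, since otherwise the identity $\nu\big((E_2\setminus E_1)_x\big)=\nu\big((E_2)_x\big)-\nu\big((E_1)_x\big)$ is meaningless. The other point requiring care --- needed merely to make the right-hand side of \eqref{d3} well defined --- is the measurability of the intermediate map $x\mapsto\int_Y f(x,\cdot)\,\vd\nu$, which is obtained as the monotone limit above rather than assumed.
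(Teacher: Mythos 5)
Your proof is correct and complete, but it takes a genuinely different --- and considerably more self-contained --- route than the paper. You run the full Tonelli bootstrap: verify the iterated-integral identity on measurable rectangles, extend it to all of $\sigma(\mathcal{K})$ via Dynkin's $\pi$-$\lambda$ theorem (correctly flagging that finiteness of $\mu$ and $\nu$ is what makes the $\lambda$-system closed under proper differences), pass to non-negative simple functions by linearity, and finish with monotone convergence on $X\times Y$, on $Y$ for each fixed $x$, and on $X$. The paper instead argues by $\sigma$-finite exhaustion: it picks sequences $\{X_n\}$, $\{Y_n\}$ of finite-measure sets, \emph{asserts} that the iterated integrals agree on each $X_n\times Y_n$, lets $n\to\infty$, and then recovers \eqref{d2} at the end by specialising to $f=\chi_E$. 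The paper's argument therefore presupposes exactly the finite-measure case that you prove --- the equality on each $X_n\times Y_n$ is the whole content of the theorem and is nowhere established there --- and it also inverts the logical order of the statement, which takes \eqref{d2} as a hypothesis rather than a consequence. Your version supplies the missing core (the $\pi$-$\lambda$ step and the monotone passage) and additionally establishes measurability of the section integrals $x\mapsto\int_Y f(x,\cdot)\,\vd\nu$, without which the right-hand sides of \eqref{d3} are not even well defined; the paper never addresses this. The only thing the paper's exhaustion would buy is the extension from finite to $\sigma$-finite measures, which is not needed for the statement as given.
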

\begin{proof} Since the measures are $\sigma$-finite there exists a pair of sequences $\{X_{n}\}$ and $\{Y_{n}\}$ such that $\nu(X_{n})<\infty$ and $\mu(Y_{n})<\infty$ then
\begin{equation}\label{d1}
\int_{Y_{n}} \int_{X_{n}} f \vd \mu\vd \nu=\int_{X_{n}} \int_{Y_{n}} f\vd \nu\vd \mu,
\end{equation}
so that as $n\rightarrow \infty$ equation \eqref{d1} gives
\begin{equation}
\int_{X\times Y}f \vd(\mu \times \nu)\equiv\int_{Y} \int_{X} f \vd \mu\vd \nu=\int_{X} \int_{Y} f\vd \nu\vd \mu,
\end{equation}
In particular if we take $f=\chi_{E}$ where $E\in\mathcal{F}\times\mathcal{S}$ we obtain \eqref{d2}.\qedhere
\end{proof}
\begin{lem}\label{Lem2} Let $m_k$ be the Lebesgue measure on $\mathbb{R}^{k}$, if we define a measure $\sigma_{k-1}$ on $\mathbb{S}^{k-1}$ (a unit sphere in $\mathbb{R}^{k}$). If further, $A\subset\mathbb{S}^{k-1}$ is a Borel set, let $\widetilde{A}:=\{r\mathbf{u}:0<r<1, \mathbf{u}\in A\}$ so that we define $\sigma_{k-1}(A)=k\cdot m_{k}(\widetilde{A}).$
Then
\begin{equation*}\displaystyle
\int_{\mathbb{R}^{k}} f\vd m_{k}=\int_{0}^{\infty}r^{k-1}\vd r \int_{\mathbb{S}^{k-1}}f(r\mathbf{u})\vd \sigma_{k-1}(\mathbf{u}).
\end{equation*}
is valid for every Borel function $f(\mathbf{x})\geq 0$ on $\mathbb{R}^{k}$ where, $\mathbf{x}=r\mathbf{u}.$
\end{lem}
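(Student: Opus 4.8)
The plan is to realise the right-hand side as the integral of $f$ against the push-forward of a product measure under the polar-coordinate map, and then to identify that push-forward with $m_{k}$ by a uniqueness-of-measure argument driven by the dilation behaviour of Lebesgue measure. First I would introduce the map $\Phi\colon(0,\infty)\times\mathbb{S}^{k-1}\to\mathbb{R}^{k}\setminus\{0\}$, $\Phi(r,\mathbf{u})=r\mathbf{u}$, which is a homeomorphism with inverse $\mathbf{x}\mapsto(|\mathbf{x}|,\mathbf{x}/|\mathbf{x}|)$ and hence a Borel isomorphism. On the source space I put the product measure $\nu$ of $r^{k-1}\,\vd r$ on $(0,\infty)$ and $\sigma_{k-1}$ on $\mathbb{S}^{k-1}$, so that $\nu(B\times A)=\bigl(\int_{B}r^{k-1}\,\vd r\bigr)\,\sigma_{k-1}(A)$; this requires first checking that $A\mapsto m_{k}(\widetilde{A})$ really is a Borel measure on $\mathbb{S}^{k-1}$, which holds because $\widetilde{A}$ is Borel in $\mathbb{R}^{k}$ whenever $A$ is (continuity of $\Phi$) and because $A\mapsto\widetilde{A}$ sends countable disjoint unions to countable disjoint unions. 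Both $m_{k}$ (restricted to $\mathbb{R}^{k}\setminus\{0\}$) and $\Phi_{*}\nu$ are $\sigma$-finite.

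The heart of the argument is the identity $\Phi_{*}\nu=m_{k}$ on $\mathbb{R}^{k}\setminus\{0\}$. I would verify it on the solid sectors $S_{\rho,A}:=\Phi\bigl((0,\rho]\times A\bigr)=\rho\widetilde{A}$, for $\rho>0$ and Borel $A\subset\mathbb{S}^{k-1}$. Using that Lebesgue measure scales by $\rho^{k}$ under the dilation $\mathbf{x}\mapsto\rho\mathbf{x}$, one gets $m_{k}(S_{\rho,A})=\rho^{k}m_{k}(\widetilde{A})=\tfrac{1}{k}\rho^{k}\,\sigma_{k-1}(A)=\sigma_{k-1}(A)\int_{0}^{\rho}r^{k-1}\,\vd r$, whereas $\nu\bigl((0,\rho]\times A\bigr)=\sigma_{k-1}(A)\int_{0}^{\rho}r^{k-1}\,\vd r$ by construction; so the two measures agree on every $S_{\rho,A}$. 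Since the collection $\{S_{\rho,A}\}$ is closed under finite intersections, generates the Borel $\sigma$-algebra of $\mathbb{R}^{k}\setminus\{0\}$, and contains an increasing sequence of finite-measure sets exhausting the space, the standard uniqueness theorem for $\sigma$-finite measures forces $\Phi_{*}\nu=m_{k}$. As $\{0\}$ is $m_{k}$-null, the identity persists on all of $\mathbb{R}^{k}$.

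It remains to unwind the definitions. For a Borel $f\geq0$ on $\mathbb{R}^{k}$, the push-forward change-of-variables formula gives $\int_{\mathbb{R}^{k}}f\,\vd m_{k}=\int_{(0,\infty)\times\mathbb{S}^{k-1}}(f\circ\Phi)\,\vd\nu=\int_{(0,\infty)\times\mathbb{S}^{k-1}}f(r\mathbf{u})\,\vd\nu(r,\mathbf{u})$; this formula I would prove in the usual three steps — it is the definition of $\Phi_{*}\nu$ for $f=\chi_{E}$, extends by linearity to nonnegative simple functions, and passes to general $f\geq0$ by monotone convergence along an increasing sequence of simple functions. Applying Tonelli's theorem in the form of the product-measure theorem recalled above, the right-hand side equals the iterated integral $\int_{0}^{\infty}r^{k-1}\,\vd r\int_{\mathbb{S}^{k-1}}f(r\mathbf{u})\,\vd\sigma_{k-1}(\mathbf{u})$, which is exactly the asserted formula. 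I expect the main obstacle to be the combination of two points: verifying that $\sigma_{k-1}$ is a well-defined (in particular countably additive) Borel measure on the sphere, and carrying out the $\pi$-system/uniqueness step cleanly; the push-forward formula and the appeal to Tonelli are then routine.
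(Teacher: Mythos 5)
Your proposal is correct and complete in outline; it is the classical rigorous proof of the polar-coordinates formula (as in Rudin): push forward the product measure $r^{k-1}\,\vd r\otimes\sigma_{k-1}$ under $\Phi(r,\mathbf{u})=r\mathbf{u}$, identify the push-forward with $m_{k}$ on the $\pi$-system of solid sectors $\rho\widetilde{A}$ via the dilation identity $m_{k}(\rho\widetilde{A})=\rho^{k}m_{k}(\widetilde{A})$, and conclude by the uniqueness theorem for $\sigma$-finite measures, the change-of-variables formula for push-forwards, and Tonelli. This is a genuinely different route from the paper's. The paper argues by a differential heuristic: it writes the volume of the ball of radius $R$ as $C_{k}R^{k}$, differentiates to obtain the surface area $kC_{k}R^{k-1}$, and declares $\vd m_{k}=R^{k-1}\,\vd R\,\vd\sigma_{k-1}$ by ``adding infinitely many thin spherical shells''; it also inserts an expansion $\sum_{i}2^{-i}\chi_{T_{i}}\equiv 1$ (built from binary digits of an unspecified function $\varphi$) into the integrand, which contributes nothing, before invoking its Fubini-type theorem. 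The crucial point --- that $m_{k}$ restricted to $\mathbb{R}^{k}\setminus\{0\}$ actually coincides with the product measure transported by $\Phi$ --- is asserted rather than proved there; your $\pi$-system/dilation step is precisely the argument that justifies it, so your version is the more careful one. Two small things to tidy when you write it up: $\Phi\bigl((0,\rho]\times A\bigr)$ and $\rho\widetilde{A}=\Phi\bigl((0,\rho)\times A\bigr)$ differ by a subset of the sphere of radius $\rho$, which is null for both measures, so the sector computation is unaffected; and the verification that $A\mapsto m_{k}(\widetilde{A})$ is countably additive should note explicitly that $\Phi$, being a homeomorphism onto $\mathbb{R}^{k}\setminus\{0\}$, carries Borel sets to Borel sets and disjoint unions to disjoint unions, exactly as you indicate.
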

\begin{proof} Without loss of generality, the Borel set $A$ is compact.We consider the sequence $\left(T_{n}\right)$ of measurable sets defined by
\begin{eqnarray*}\displaystyle
	T_{1} &=& \left\{\mathbf{x}\in A\bigg|\frac{1}{2}\leq \varphi(\mathbf{x}) <1\right\} \\
	T_{n} &=& \left\{\mathbf{x}\in A\bigg|\frac{1}{2^{n}}\leq \varphi(\mathbf{x})-\sum_{i=1}^{n-1}\frac{1}{2^{i}}\chi_{T_{i}}(\mathbf{x}) <\frac{1}{2^{n-1}}\right\},\qquad \forall n\geq 2 ,\end{eqnarray*}
where $\displaystyle \chi_{T_{i}}(\mathbf{x})$ is the characteristic function with respect to terms $T_{i}$, of the the sequence $\left(T_{n}\right)$ and $\varphi:\mathbb{R}^{k}\longrightarrow \mathbb{R}:\mathbf{x}\mapsto\varphi(\mathbf{x})$. . Also arguing by induction, it could be seen that for any $\mathbf{x}\in A, i\in \mathbb{N},\chi_{T_{i}}(\mathbf{x})=a_{i},$ where $a_{i}$ is the $i-$th digit in the binary expansion of $\varphi(\mathbf{x})$, i.e., $0.a_{1}a_{2}\ldots a_{i} \ldots$. Therefore,
\begin{equation*}
0\leq \varphi(\mathbf{x})-\sum_{i=1}^{n}\frac{1}{2^{i}}\chi_{T_{i}}(\mathbf{x}) <\frac{1}{2^{n}}, \forall \mathbf{x}\in\mathbb{R}^{N}, \forall n\in\mathbb{N}.
\end{equation*}
Now, as $n \rightarrow \infty$ we see that
\begin{equation}\label{d5}\displaystyle
\varphi(\mathbf{x})=\sum_{i=1}^{\infty}\frac{1}{2^{i}}\chi_{T_{i}}(\mathbf{x}), \forall \mathbf{x}\in\mathbb{R}^{N},
\end{equation}
where the series converges uniformly to 1 in $\mathbb{R}^{N}.$

Now to prove Lemma~\ref{Lem2} we see that geometrically, $\mathbb{R}^{k}-\{0\}= \left(0,\infty\right)\times \mathbb{S}^{k-1}$. We also need to define a function $f:\mathbb{R}^{k}\longrightarrow\mathbb{R}^{+}:r\mathbf{u}\mapsto f(r\mathbf{u}) $ .We shall construct the Borel measure or the volume differential element on $\mathbb{S}^{k-1}$ in what follows.

Firstly, we consider the $k-$dimensional hypersphere
\[\mathcal{H}^{k}:=\{\mathbf{x}\in\mathbb{R}^{k}:\|\mathbf{x}\|^{2}=\sum_{i=1}^{k}x_{i}^{2}\leq R^{2}\}.\]
The volume of the $k-$dimensional Hypersphere is
\[m_{k}(R)=\int\ldots\int_{\mathcal{H}^{k}}\vd x_{1}\ldots \vd x_{k}=C_{k}R^{k}.\]
It is also obvious that the boundary $\partial \mathcal{H}^{k}=\mathbb{S}^{k-1}$. Thus, we now see that if we add infinitely many thin spherical shells of radius $0\leq r \leq R$ we have
\[m_{k}(R)=\int_{0}^{R}\sigma_{k-1}(r)\vd r,\]
where $\sigma_{k-1}(R)$ is the surface area of $\mathcal{H}^{k}$ and by fundamental theorem of calculus
\[\sigma_{k-1}(R)= \frac{\vd m_{k}(R)}{\vd R}=kC_{k}R^{k-1},\]
which now gives
\[\vd m_{k}(R)=S_{k-1}(R)\vd R=R^{k-1}\vd R \quad kC_{k}=R^{k-1}\vd R \vd \sigma_{k-1}(R),\]
where $\vd \sigma_{k-1}(R)=kC_{k}$ is the differential element (or measure) on the sphere $\mathbb{S}^{k-1}$ with radius R,
so that by \eqref{d3} and \eqref{d5} we obtain
\begin{eqnarray}\label{d4}
\int_{\mathbb{R}^{k}}f(\mathbf{x})\vd m_{k}(\mathbf{x}) &=& \int_{[0,\infty)\times \mathbb{S}^{k-1}}f(\mathbf{x})\vd m_{k}(\mathbf{x})\nonumber \\
&=&\int_{[0,\infty)\times \mathbb{S}^{k-1}}f(r\mathbf{u})\vd m_{k}(r\mathbf{u}) \nonumber \\
&=& \int_{0}^{\infty}r^{k-1}\vd r\int_{\mathbb{S}^{k-1}}\sum_{i=1}^{\infty}\frac{1}{2^{i}}\chi_{T_{i}}(\mathbf{x})f(r\mathbf{u})\vd \sigma_{k-1}(\mathbf{u})\nonumber\\
&=& \int_{0}^{\infty}r^{k-1}\vd r\int_{\mathbb{S}^{k-1}}f(r\mathbf{u})\vd \sigma_{k-1}(\mathbf{u}).\hspace{2 in.}
\end{eqnarray}
However if $R=1$, $\mathcal{H}^{k} (\mathbb{S}^{k-1})$ are regarded as unit Hypersphere (unit Sphere) respectively. \qedhere
\end{proof}
\cor If the function $f(\mathbf{x})=e^{-\langle \mathbf{x},\mathbf{x}\rangle}$ in the equation \eqref{d4} we obtain $\displaystyle \sigma_{k-1}=\frac{k\pi^{\frac{k}{2}}}{\Gamma\left(1+\frac{k}{2}\right)}$.\ \\
\begin{proof} According to \eqref{d4}
\begin{eqnarray*}
	\int_{\mathbb{R}^{k}}f(\mathbf{x})\vd m_{k}(\mathbf{x})&=&\int_{\mathbb{R}^{k}}e^{-\langle \mathbf{x},\mathbf{x}\rangle}\vd m_{k}(\mathbf{x})\\
	&=&\int_{0}^{\infty}r^{k-1}\vd r\int_{\mathbb{S}^{k-1}}f(r\mathbf{u})\vd \sigma_{k-1}(\mathbf{u})  \\
	&=& \int_{0}^{\infty}r^{k-1}\vd r\int_{\mathbb{S}^{k-1}}e^{-\langle r\mathbf{u},r\mathbf{u}\rangle}\vd \sigma_{k-1}(\mathbf{u}) \\
	&=& \int_{0}^{\infty}r^{k-1}\vd r\int_{\mathbb{S}^{k-1}}e^{-r^{2}\langle \mathbf{u},\mathbf{u}\rangle}\vd \sigma_{k-1}(\mathbf{u}) \\
	&=&  \int_{0}^{\infty}r^{k-1}e^{-r^{2}}\vd r\int_{\mathbb{S}^{k-1}}\vd \sigma_{k-1}(\mathbf{u}) \\
	&=&\frac{1}{2}\Gamma\left(\frac{k}{2}\right)\sigma_{k-1}\\
	&=&\frac{k}{2}\Gamma\left(\frac{k}{2}\right)C_{k}\\
	&=&\Gamma\left(1+\frac{k}{2}\right)C_{k}.
\end{eqnarray*}
Also on the other hand we see that
\begin{eqnarray*}\displaystyle
	\int_{\mathbb{R}^{k}}f(\mathbf{x})\vd m_{k}(\mathbf{x}) &=& \int_{-\infty}^{\infty}\ldots\int_{-\infty}^{\infty}e^{-(x_{1}^{2}+\ldots +x_{k}^{2})}\vd x_{1}\ldots \vd x_{k} \\
	&=& \int_{-\infty}^{\infty}e^{-x_{1}^{2}}\vd x_{1}\ldots\int_{-\infty}^{\infty}e^{-x_{k}^{2}}\vd x_{k} \\
	&=& (\sqrt{\pi})^{k}\\
	&=& \pi^{\frac{k}{2}}.
\end{eqnarray*}
Combining the two results above we have that
\[\Gamma\left(1+\frac{k}{2}\right)C_{k}=\pi^{\frac{k}{2}}.\]
So that solving for $C_{k}$ we obtain
\[C_{k}=\frac{\pi^{\frac{k}{2}}}{\Gamma\left(1+\frac{k}{2}\right)}.\]
Hence, we obtain
\[\sigma_{k-1}=kC_{k}=\frac{k\pi^{\frac{k}{2}}}{\Gamma\left(1+\frac{k}{2}\right)}.\]
\end{proof}
\begin{rem} The value for the surface area of the sphere $\mathbb{S}^{k-1}\subset\mathbb{R}^{k}$ when $k=2,3$ are $2\pi$ and $4\pi$ respectively. We also notice the inclusion map $\mathbb{S}^{1}\hookrightarrow \mathbb{CP}^{1}$ so that the torus $\mathbb{T}=\mathbb{CP}^{1}/\mathbb{S}^{1}$ which is a quotient with $ker \mathbb{T}=\mathbb{S}^{1}$. So that there exist an equivalence relation between $\mathbb{CP}^{1}$ and $\mathbb{S}^{1}$. Hence, the integral over the Riemann sphere is equivalent to the integral over $\mathbb{S}^{1}$.
\end{rem}
It is known (\cite{AMP1}, \S 16.1, Examples 4 of Manifold p.187) that the invariant measure on the 2-dimensional sphere $\mathbb{S}^{2}\subset\mathbb{R}^{3}$ is given by
\begin{equation}\label{med}
\vd v(\theta,\varphi)= r^{2}\sin\theta\vd\theta\wedge\vd \varphi=r^{2}\vd\Omega
\end{equation}
where the complex number $v$ is defined via stereographic projection
$$z=x+iy=r\cot \frac{\theta}{2}e^{i\varphi}.$$
and $\vd\Omega$ is the differential solid angle which yields  $\Omega=4\pi$. Thus, integrating \eqref{med} yields
$v=4\pi r^{2}$.

An important remark followed this result which is that $\mathscr{I}_2$ is not $\|\cdot\|-$closed and when $\mathscr{H}=L^{2}(M,\vd\mu)$ (that is, a space of square-integrable functions over a measurable space $(M,\vd\mu)$), $\mathscr{I}_{2}$ is also true.  The next result follows from these remarks.
\begin{teo}[~\cite{RS}, Theorem~VI.23, p.210]\label{HST}Let $(M,\vd\mu )$ be a measurable space and $\mathscr{H}=L^{2}(M,\vd\mu )$. Then $T_{K}\in\mathscr{B}(\mathscr{H})$ is a Hilbert-Schmidt operator if and only if there is a function $K\in L^{2}(M\times M,\vd\mu \otimes\vd\mu) $
with $$T_{K}f(x)=\int K(x,y)f(y)\vd \mu(y)$$ and moreover $$\|T_{K}\|_{2}^{2}=\int |K(x,y)|^{2}\vd\mu(x)\vd\mu(y)$$
where $T_{K}$ is the integral operator associated with $K$.
\end{teo}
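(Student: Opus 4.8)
The plan is to reduce the statement to an identity between Fourier coefficients. Fix a (countable) orthonormal basis $\{\phi_n\}_{n\in\mathbb{N}}$ of $\mathscr{H}=L^2(M,\vd\mu)$ — legitimate since the $L^2$-spaces occurring in this paper are separable — and form the functions $\Phi_{nm}(x,y):=\phi_n(x)\overline{\phi_m(y)}$ on $M\times M$. The backbone of the argument is the assertion that $\{\Phi_{nm}\}_{n,m\in\mathbb{N}}$ is an orthonormal basis of $L^2(M\times M,\vd\mu\otimes\vd\mu)$. Orthonormality is immediate: applying the product-integral formula \eqref{d3} first to $|\Phi_{nm}\overline{\Phi_{n'm'}}|$ (to secure integrability) and then, after splitting into real/imaginary and positive/negative parts, to $\Phi_{nm}\overline{\Phi_{n'm'}}$, one gets $\langle\Phi_{nm},\Phi_{n'm'}\rangle=\langle\phi_n,\phi_{n'}\rangle\,\overline{\langle\phi_m,\phi_{m'}\rangle}=\delta_{nn'}\delta_{mm'}$.

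For completeness, suppose $F\in L^2(M\times M)$ is orthogonal to every $\Phi_{nm}$. For fixed $n$ set $g_n(y):=\int_M F(x,y)\overline{\phi_n(x)}\,\vd\mu(x)$; by Cauchy--Schwarz in the $x$-variable together with \eqref{d3} applied to $|F|^2$, one has $g_n\in L^2(M,\vd\mu)$, and the hypothesis reads $\langle g_n,\phi_m\rangle=0$ for all $m$, so $g_n=0$ $\mu$-a.e. Removing the countable union of the corresponding null sets, we find that for $\mu$-a.e.\ $y$ the section $F(\cdot,y)$ is orthogonal to every $\phi_n$, hence $F(\cdot,y)=0$ a.e.; a final appeal to \eqref{d3} gives $F=0$ in $L^2(M\times M)$. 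Thus $\{\Phi_{nm}\}$ is complete.

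Granting this, the theorem follows by Parseval. For the ``if'' direction, given $K\in L^2(M\times M)$ expand $K=\sum_{n,m}c_{nm}\Phi_{nm}$ with $\sum_{n,m}|c_{nm}|^2=\|K\|_{L^2}^2$. Substituting the expansion into $T_Kf(x)=\int K(x,y)f(y)\,\vd\mu(y)$ and interchanging sum and integral (justified by $L^2$-convergence and \eqref{d3}) yields $T_K\phi_m=\sum_n c_{nm}\phi_n$, so $\|T_K\phi_m\|^2=\sum_n|c_{nm}|^2$ and $\sum_m\|T_K\phi_m\|^2=\sum_{n,m}|c_{nm}|^2=\|K\|_{L^2}^2<\infty$; since the Hilbert--Schmidt norm is independent of the chosen basis, $T_K\in\mathscr{B}_2(\mathscr{H})$ and $\|T_K\|_2^2=\int|K(x,y)|^2\,\vd\mu(x)\,\vd\mu(y)$. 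For the ``only if'' direction, given $T\in\mathscr{B}_2(\mathscr{H})$ set $c_{nm}:=\langle\phi_n,T\phi_m\rangle$; then $\sum_{n,m}|c_{nm}|^2=\sum_m\|T\phi_m\|^2=\|T\|_2^2<\infty$, so $K:=\sum_{n,m}c_{nm}\Phi_{nm}$ defines an element of $L^2(M\times M)$, and the computation just performed shows $\langle\phi_n,T_K\phi_m\rangle=c_{nm}=\langle\phi_n,T\phi_m\rangle$ for all $n,m$, whence $T_K=T$.

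I expect the only genuinely delicate point to be the completeness of $\{\Phi_{nm}\}$ — more precisely, the measurability and integrability bookkeeping that licenses the repeated use of \eqref{d3} on the complex, non-sign-definite functions $F(x,y)\overline{\phi_n(x)}$, together with the control of the countably many exceptional $\mu$-null sets (where separability of $\mathscr{H}$ enters essentially). The remaining steps are routine Bessel/Parseval manipulations.
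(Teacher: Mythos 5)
Your proof is correct, and it is essentially the argument the paper relies on: the paper does not reproduce a proof at all but simply defers to Reed and Simon (Theorem~VI.23), whose proof is exactly your route --- take a countable orthonormal basis $\{\phi_n\}$ of the separable space $L^{2}(M,\vd\mu)$, show that the products $\phi_n(x)\overline{\phi_m(y)}$ form an orthonormal basis of $L^{2}(M\times M,\vd\mu\otimes\vd\mu)$, and conclude by matching the Parseval expansion of $K$ with the Hilbert--Schmidt norm $\sum_m\|T\phi_m\|^2$. The only hypothesis worth flagging explicitly is separability of $L^{2}(M,\vd\mu)$, which you do note and which is also implicit in the cited source.
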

\begin{proof}
For proof see Reed and Simons~(\cite{RS}, Theorem~VI.23, pp.210-211).\qedhere
\end{proof}
The work of Ali~\emph{et. al.} on Holomorphic kernels (which in this case are defined on the Riemann sphere) substantiates the square integrability of $T_{K}$ (see~\cite{AAG}, \S6.2, Lemma 6.2.1 and Theorem 6.2.2, along with their proofs, pp.112-116).

With the foregoing in place, it is now appropriate to introduce the spectral shift function using the Kre\u{i}n's trace formula. If $(\mathscr{H}, \langle\cdot,\cdot\rangle)$ is a Hilbert space,$ A \in \mathscr{B}_{1}(\mathscr{H})$ and $N$ is a basis of $\mathscr{H}$, then the trace of the operator $A$ belonging to the trace-class $\mathscr{B}_{1}(\mathscr{H})$ is defined by
$$\mathrm{Tr}_{\mathscr{H}}(A) := \sum_{u\in N}\langle u,Au\rangle$$
(cf:~\cite{VM},Proposition 4.34,\S 4.44,p.191). The trace of the function of an operator $H$ is defined by
\begin{equation}\label{NT}
\mathrm{Tr}_{\mathscr{H}}(f(\mathbf{H})):=\sum_{\varphi\in\mathscr{B}}\langle \varphi,f(\mathbf{H})\varphi\rangle
\end{equation}
where $\mathscr{B}$ is the basis of the Hilbert space $\mathscr{H}=L^{2}(\Omega)$. For any two operators $\mathbf{H}_{k}\;(k=1,2)$
\begin{equation}\label{NT1}
\mathrm{Tr}_{\mathscr{H}}(f(\mathbf{H}_{1})-f(\mathbf{H}_{2})):=\sum_{\varphi\in\mathscr{B}}\langle \varphi,(f(\mathbf{H}_{1})-f(\mathbf{H}_{2}))\varphi\rangle=\int_{\Omega}f'(\lambda)\xi(\lambda;\mathbf{H}_{1},\mathbf{H}_{2})\vd\lambda
\end{equation}
where the function $f\in\mathscr{C}_{c}^{\infty}(\Omega), \Omega\subseteq \mathbb{R}$  is defined as function of an operator in \eqref{rcc1} and $\langle \varphi,\varphi\rangle=\|\varphi\|_{L^{2}(\Omega)}^{2}=1$. The function $\xi(\lambda;\mathbf{H}_{1},\mathbf{H}_{2})$ is called the spectral shift function (SSF). The SSF was first discovered by a great theoretical physicist, I.M. Lifshitz (1917-1982) in 1947. He considered perturbations of an operator $\mathbf{H}_{0}$ arising as Hamiltonian of a lattice model in quantum mechanics (of which Lam\'e operator is a typical example) (see \cite{CGL}:71). A great Mathematician further developed the work of Lifshitz called M.G. Krein (1907-1989). The operator $\mathbf{H}_{0}$ is perturbed by a finite-rank perturbation $V$.

In what follows let $\mathbf{H}, \mathbf{H}_{0}$ be self-adjoint operators in $\mathscr{H}$ with the domains $dom(\mathbf{H}_{0})=dom(\mathbf{H})$ and $V=\mathbf{H}-\mathbf{H}_{0}\in\mathscr{B}_{1}(\mathscr{H})$. Assuming $V\mathcal{R}_{z}(\mathbf{H}_{0})\in\mathscr{B}_{1}(\mathscr{H})$ it is convenient to introduce the perturbation determinant as
\begin{eqnarray}\label{detf}
\triangle_{\mathbf{H}/\mathbf{H}_{0}}(z) &=& \det{}_{\mathscr{H}}(I+V\mathcal{R}_{z}(\mathbf{H}_{0}))\nonumber \\
&=&\det{}_{\mathscr{H}}(I+(\mathbf{H}-\mathbf{H}_{0})\mathcal{R}_{z}(\mathbf{H}_{0}))\nonumber \\
&=& \det{}_{\mathscr{H}}(\mathbf{H}-zI)(\mathbf{H}_{0}-zI)^{-1}),\;\;\;\;z\in\mathbb{C}.
\end{eqnarray}
The mapping $z\mapsto\triangle_{\mathbf{H}/\mathbf{H}_{0}}(z)$ is analytic in the half-plane $\Im z\in\mathbb{R}\setminus \{0\}$ and thus is an Herglotz function. This function has its complex conjugate given by
$$\triangle_{\mathbf{H}/\mathbf{H}_{0}}(\overline{z})=\overline{\triangle_{\mathbf{H}/\mathbf{H}_{0}}(z)},\;\;\;\Im z\neq 0.$$
Further since $V\in\mathscr{B}_{1}(\mathscr{H})$, standard properties of resolvent imply that $\|V\mathcal{R}_{z}(\mathbf{H})\|_{1}\rightarrow 0$ as $|\Im z|\rightarrow\infty$ and thus $\triangle_{\mathbf{H}/\mathbf{H}_{0}}(z)\rightarrow 1$ as $|\Im z|\rightarrow\infty$.
Since the function $\triangle_{\mathbf{H}/\mathbf{H}_{0}}(z)$ is analytic both in the lower and upper half-planes it is an important fact in complex analysis that there exists an analytic multi-valued function $G(z)$ which is periodic in $2i\pi  k,(i=\sqrt{-1}, k\in\mathbb{Z})$ with $e^{G(z)}=\triangle_{\mathbf{H}/\mathbf{H}_{0}}(z).$ Naturally, it follows that $G(z)=\ln \triangle_{\mathbf{H}/\mathbf{H}_{0}}(z)$ and by fixing the branch of $G(z)$ it is easy to see that $\displaystyle\lim_{|\Im z|\rightarrow\infty}G(z)=0$ .

It is necessary here to take a brief look at the contribution of M.G. Krein to the advancement of the study of SSF.
\begin{teo} [~\cite{MGK}, p.134] Let $\mathbf{H}, \mathbf{H}_{0}$ be self-adjoint operators in $\mathscr{H}$, $\mathbf{H}=\mathbf{H}_{0}+V$ and $V\in\mathscr{B}_{1}(\mathscr{H})$. Then
\begin{equation}\label{KTF1}
\ln\triangle_{\mathbf{H}/\mathbf{H}_{0}}(z)=\int_{\mathbb{R}^{1}}\frac{\xi(\lambda;\mathbf{H},\mathbf{H}_{0})}{\lambda-z}\vd\lambda,\;\;\Im z> 0.
\end{equation}
with $\xi(\lambda;\mathbf{H},\mathbf{H}_{0})\in L^{1}(\mathbb{R}^{1})$ where
\begin{equation}\label{KTF2}
\int_{\mathbb{R}^{1}}|\xi(\lambda;\mathbf{H},\mathbf{H}_{0})|\vd\lambda\leq\|V\|_{1}\;\;\;\mathrm{and}\;\;\;\int_{\mathbb{R}^{1}}\xi(\lambda;\mathbf{H},\mathbf{H}_{0})\vd\lambda=\mathrm{Tr}_{\mathscr{H}}(V)
\end{equation}
The function $\xi(\lambda;\mathbf{H},\mathbf{H}_{0})$ is uniquely determined by Steiltjes inversion formula
\begin{equation}\label{KTF3}
\xi(\lambda;\mathbf{H},\mathbf{H}_{0})=\frac{1}{\pi}\lim_{\epsilon\downarrow 0}\arg \triangle_{\mathbf{H}/\mathbf{H}_{0}}(\lambda+i\epsilon),\;\;\; i=\sqrt{-1}.
\end{equation}
Here $L^{1}(\mathbb{R}^{1})$ denotes the space of all Lesbegue integrable functions on $\mathbb{R}^{1}$ and \eqref{KTF3} is the Privalov's representation of the SSF.
\end{teo}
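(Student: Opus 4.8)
The plan is to establish the representation \eqref{KTF1} first for a rank-one perturbation, then bootstrap to finite rank using the multiplicativity of perturbation determinants, then pass to a general $V\in\mathscr{B}_{1}(\mathscr{H})$ by trace-norm approximation, and finally read off the inversion formula \eqref{KTF3} from the boundary behaviour of a Poisson integral. For the rank-one case I would write $V=\alpha\langle\cdot,\phi\rangle\phi$ with $\|\phi\|=1$ and $\alpha\in\mathbb{R}$; the last line of \eqref{detf} together with the rank-one determinant identity gives $\triangle_{\mathbf{H}/\mathbf{H}_{0}}(z)=1+\alpha\langle\mathcal{R}_{z}(\mathbf{H}_{0})\phi,\phi\rangle=1+\alpha\, m(z)$, where $m(z)=\int_{\mathbb{R}}(\lambda-z)^{-1}\vd\mu_{\phi}(\lambda)$ and $\vd\mu_{\phi}=\vd\langle E_{\lambda}^{0}\phi,\phi\rangle$ is a probability measure, so $m$ is a Herglotz function with $\Im m(z)>0$ for $\Im z>0$. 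Since $\mathbf{H}-z$ and $\mathbf{H}_{0}-z$ are invertible for $z$ non-real, $\triangle_{\mathbf{H}/\mathbf{H}_{0}}$ does not vanish in the upper half-plane, so $G(z):=\ln\triangle_{\mathbf{H}/\mathbf{H}_{0}}(z)$ has a well-defined analytic branch with $G(z)\to 0$ as $z\to i\infty$. If $\alpha>0$ then $\triangle_{\mathbf{H}/\mathbf{H}_{0}}$ is itself Herglotz, hence $0<\arg\triangle_{\mathbf{H}/\mathbf{H}_{0}}(z)<\pi$ and $0\le\Im G\le\pi$; the Nevanlinna representation of a function holomorphic in the upper half-plane with bounded non-negative imaginary part forces the representing measure to be $\xi\,\vd\lambda$ with $0\le\xi\le 1$, a monotone-convergence comparison of $\Im G(iy)$ as $y\to\infty$ shows $\int_{\mathbb{R}}\xi\,\vd\lambda=\alpha<\infty$ (so $\xi\in L^{1}(\mathbb{R})$), and using $G(iy)\to 0$ to eliminate the remaining real constant yields \eqref{KTF1} with $\int|\xi|=\int\xi=\alpha=\|V\|_{1}=\mathrm{Tr}_{\mathscr{H}}(V)$. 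For $\alpha<0$ I would apply the same argument to $-V$, that is to the pair $(\mathbf{H}_{0},\mathbf{H})$, obtaining $-1\le\xi\le 0$ and the corresponding identities with the signs tracked.

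Next, for a self-adjoint finite-rank $V=\sum_{k=1}^{n}\alpha_{k}\langle\cdot,\phi_{k}\rangle\phi_{k}$ with $\{\phi_{k}\}$ orthonormal, I would telescope through $\mathbf{H}^{(0)}=\mathbf{H}_{0}$ and $\mathbf{H}^{(k)}=\mathbf{H}^{(k-1)}+\alpha_{k}\langle\cdot,\phi_{k}\rangle\phi_{k}$, so that $\mathbf{H}^{(n)}=\mathbf{H}$. Multiplicativity of $\det_{\mathscr{H}}$ applied to the last line of \eqref{detf} gives $\triangle_{\mathbf{H}/\mathbf{H}_{0}}(z)=\prod_{k=1}^{n}\triangle_{\mathbf{H}^{(k)}/\mathbf{H}^{(k-1)}}(z)$, hence $G=\sum_{k}\ln\triangle_{\mathbf{H}^{(k)}/\mathbf{H}^{(k-1)}}$; setting $\xi:=\sum_{k}\xi_{k}$ with $\xi_{k}$ furnished by the rank-one step then gives \eqref{KTF1} with $\int_{\mathbb{R}}|\xi|\,\vd\lambda\le\sum_{k}|\alpha_{k}|=\|V\|_{1}$ and $\int_{\mathbb{R}}\xi\,\vd\lambda=\sum_{k}\alpha_{k}=\mathrm{Tr}_{\mathscr{H}}(V)$, which is \eqref{KTF2}.

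For the general case I would choose finite-rank $V_{N}\to V$ in $\mathscr{B}_{1}(\mathscr{H})$, set $\mathbf{H}_{N}=\mathbf{H}_{0}+V_{N}$ and $\xi_{N}=\xi(\cdot;\mathbf{H}_{N},\mathbf{H}_{0})$, and use the chain rule for the shift function --- itself read off from $\triangle_{\mathbf{H}_{N}/\mathbf{H}_{0}}=\triangle_{\mathbf{H}_{N}/\mathbf{H}_{M}}\triangle_{\mathbf{H}_{M}/\mathbf{H}_{0}}$ --- which gives $\xi_{N}-\xi_{M}=\xi(\cdot;\mathbf{H}_{N},\mathbf{H}_{M})$, hence $\|\xi_{N}-\xi_{M}\|_{L^{1}}\le\|V_{N}-V_{M}\|_{1}\to 0$ and $\xi_{N}\to\xi$ in $L^{1}(\mathbb{R})$. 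Simultaneously, trace-norm continuity of the Fredholm determinant gives $\triangle_{\mathbf{H}_{N}/\mathbf{H}_{0}}\to\triangle_{\mathbf{H}/\mathbf{H}_{0}}$ locally uniformly on the upper half-plane, hence $\ln\triangle_{\mathbf{H}_{N}/\mathbf{H}_{0}}\to\ln\triangle_{\mathbf{H}/\mathbf{H}_{0}}$; since $(\lambda-z)^{-1}$ is bounded for each fixed $z$ with $\Im z>0$, passing to the limit in the finite-rank identity yields \eqref{KTF1} in general, and \eqref{KTF2} is preserved in the limit. Finally, putting $z=\lambda+i\epsilon$ in \eqref{KTF1}, the imaginary part $\Im\ln\triangle_{\mathbf{H}/\mathbf{H}_{0}}(\lambda+i\epsilon)=\int_{\mathbb{R}}\frac{\epsilon\,\xi(\mu)}{(\mu-\lambda)^{2}+\epsilon^{2}}\,\vd\mu$ is $\pi$ times the Poisson integral of the $L^{1}$ function $\xi$, so it converges to $\pi\xi(\lambda)$ for almost every $\lambda$ as $\epsilon\downarrow 0$ by Fatou's theorem; since $\Im\ln\triangle_{\mathbf{H}/\mathbf{H}_{0}}=\arg\triangle_{\mathbf{H}/\mathbf{H}_{0}}$ on the chosen branch, this is exactly \eqref{KTF3}, and uniqueness of $\xi$ follows because a Poisson integral determines its $L^{1}$ density almost everywhere.

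The part I expect to be the main obstacle is the sign bookkeeping in the rank-one step: the two-sided bound $0\le\xi\le 1$, and hence the estimate $\int|\xi|\le\|V\|_{1}$, is available only when $\alpha$ has a definite sign, which is precisely what dictates the spectral splitting of $V$ through the telescoping chain; relatedly, the $L^{1}$-convergence used in the trace-class limit rests on the chain rule $\xi(\cdot;\mathbf{H}_{N},\mathbf{H}_{0})-\xi(\cdot;\mathbf{H}_{M},\mathbf{H}_{0})=\xi(\cdot;\mathbf{H}_{N},\mathbf{H}_{M})$, which must be derived carefully from the multiplicativity of perturbation determinants before it can be invoked. The remaining ingredients --- the Nevanlinna representation with bounded imaginary part, trace-norm continuity of the Fredholm determinant, and the Poisson-boundary limit --- are standard.
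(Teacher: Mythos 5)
Your proposal is correct and follows the same overall skeleton as the paper's proof: establish the representation for a rank-one perturbation, telescope to finite rank via multiplicativity of the perturbation determinant, pass to general trace-class $V$ by trace-norm approximation, and then invert to obtain the Privalov formula. Where the two arguments genuinely differ is in the key analytic lemma used at the rank-one stage and in the inversion. The paper obtains \eqref{KTF1} by formally substituting $f(\lambda)=\ln(\lambda-z)$ into the trace identity \eqref{NT1} (which already presupposes the existence of $\xi$), and then supports the bound $0<\xi<1$ by an explicit computation of $\triangle_{\mathbf{H}/\mathbf{H}_{0}}(\lambda+i\epsilon)$ that reduces the resolvent kernel to $1+i\gamma/\epsilon$ and extracts $\arg\triangle$ from a first-order expansion of the logarithm; the final step invokes the Stieltjes--Perron inversion pair \eqref{STF}--\eqref{STF2}. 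You instead derive the rank-one representation from the Herglotz property of $1+\alpha\,m(z)$ together with the Nevanlinna representation of a holomorphic function with bounded non-negative imaginary part, normalize using $G(iy)\to 0$, and recover \eqref{KTF3} from Fatou's theorem on Poisson integrals of $L^{1}$ densities. Your route buys rigour at exactly the points where the paper's computation is most fragile (the limit $\epsilon\downarrow 0$ of $1+i\gamma/\epsilon$, and the formal interchange of limit and integral in the paper's inversion step), at the cost of importing the exponential Herglotz representation as an external tool; the paper's route is more self-contained and computational but leans on identities (such as additivity of the determinant over $I$ and $V\mathcal{R}_{z}(\mathbf{H}_{0})$) that only make sense in the rank-one setting and are not justified as stated. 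One caveat on your own write-up: in the mixed-sign finite-rank case the two-sided bound on each $\xi_{k}$ has a definite sign only per summand, so the inequality $\int|\xi|\le\sum_{k}|\alpha_{k}|$ uses the triangle inequality on the telescoped sum rather than a pointwise bound on $\xi$ itself --- you flag this correctly as the delicate point, and it is handled the same way (summing $|\gamma_{m}|$) in the paper.
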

\begin{proof}
	By the expressions in \eqref{trx} and \eqref{detf}, we have
	\begin{eqnarray}
	G(z) &=&\ln\triangle_{\mathbf{H}/\mathbf{H}_{0}}(z) \nonumber \\
	&=&\ln \det{}_{\mathscr{H}}(\mathbf{H}-zI)(\mathbf{H}_{0}-zI)^{-1})\nonumber\\
	&=& \mathrm{Tr}_{\mathscr{H}}(\ln (\mathbf{H}-zI)(\mathbf{H}_{0}-zI)^{-1}))\nonumber \\
	&=& \mathrm{Tr}_{\mathscr{H}}(\ln (\mathbf{H}-zI)-\ln(\mathbf{H}_{0}-zI)).
	\end{eqnarray}
	By setting $f(\mathbf{H})=\ln (\mathbf{H}-zI)$ the corresponding characteristic polynomial $f(\lambda)=\ln(\lambda-z)$ and its first derivative $f'(\lambda)=(\lambda-z)^{-1}$. If this is substituted in \eqref{NT1} above we obtain $G(z)$ given as in the expression~\eqref{KTF1}.
	Next, suppose that the $\mathrm{rank}(V)$ is finite, that is, say $n$. We write
	\begin{equation}\label{KTF4}
	V=\sum_{k=1}^{n}\gamma_{k}\langle \cdot,\varphi_{k}\rangle\varphi_{k}, \;\;\;\;\gamma_{k}=\overline{\gamma_{k}},\;\;\;\;\|\varphi_{k}\|=1\;\;(1\leq k\leq n).
	\end{equation}
	Let  $V_{m}=:\sum_{k=1}^{m}\gamma_{k}\langle \cdot,\varphi_{k}\rangle\varphi_{k}, \;\;\;\;\mathbf{H}_{0}+V_{m}\;\;\;\;\|\varphi_{k}\|=1\;\;(1\leq m\leq \mathrm{rank}(V).$
	Then $\mathbf{H}_{m}-\mathbf{H}_{m-1}=V_{m}-V_{m-1}$ is a rank one operator, that is, $\mathrm{rank}(V_{m}-V_{m-1})=1$ and thus, $V_{m}-V_{m-1}=\gamma_{m}\langle \cdot,\varphi_{m}\rangle\varphi_{m}$ . Let $\mathbf{H}_{n}=H$ then
	\begin{equation}\label{KTF5}
	G(z)=\ln\triangle_{\mathbf{H}/\mathbf{H}_{0}}(z)=\ln\left(\prod_{m=1}^{n}\triangle_{\mathbf{H}_{m}/\mathbf{H}_{m-1}}(z)\right)=\sum_{m=1}^{n}\ln\triangle_{\mathbf{H}_{m}/\mathbf{H}_{m-1}}(z).
	\end{equation}
	Now by expression~\eqref{KTF4}, we define the determinant function
	\begin{eqnarray}\label{KTF6}
	\triangle{}_{\mathscr{H}}(z)&=&\det{}_{\mathscr{H}}(I+V\mathcal{R}_{z}(H_{0}))\nonumber\\
	&=& \det{}_{\mathscr{H}}I+\det{}_{\mathscr{H}}V\mathcal{R}_{z}(H_{0})\nonumber\\
	&=& 1 + \gamma\langle \mathcal{R}_{z}(H_{0})\varphi,\varphi\rangle\nonumber\\
	&=& 1 + \gamma \int_{0}^{\infty}(\lambda-z)^{-1}\vd\langle P_{0}(\lambda)\varphi,\varphi\rangle,\;\;\;\gamma\in\mathbb{R}^{1}
	\end{eqnarray}
	(cf:~\cite{YD}:70). Here, $P_{0}(\lambda)$ is a projection valued measure. Let $z=\lambda+i\epsilon$, then, equation~\eqref{KTF6} becomes
	\begin{eqnarray}
	\triangle{}_{\mathscr{H}}(\lambda+i\epsilon)  &=& 1 - \frac{\gamma}{\epsilon} \int_{-\infty}^{\infty}\vd\langle P_{0}(\lambda)\varphi,\varphi\rangle \nonumber\\
	&=&  1 - \frac{\gamma}{\epsilon}\langle\varphi,\varphi\rangle \int_{-\infty}^{\infty} P_{0}(\vd\lambda) \nonumber\\
	&=& 1 - \frac{\gamma}{i\epsilon} \langle \varphi,\varphi\rangle\nonumber \\
	&=& 1 - \frac{\gamma}{i\epsilon}\|\varphi\|_{L^{2}(\mathbb{R}^{1})}^{2}\nonumber\\
	&=& 1 +i \frac{\gamma}{\epsilon},\;\;\;\;\;\;\;\;\;\;i=\sqrt{-1}\label{KTF7}
	\end{eqnarray}
	so that $\triangle{}_{\mathscr{H}}(\lambda+i\epsilon)\rightarrow 1$ as $\epsilon\rightarrow \infty.$
	We note also that the expression in \eqref{KTF6} through \eqref{KTF7} can also be written in the form
	\begin{equation}\label{KTF8}
	\triangle_{\mathbf{H}/\mathbf{H}_{0}}(z)  = 1 - \int_{\mathbb{R}^{1}}(\lambda-z)^{-1}\vd_{\varphi}\mu(\lambda)=1+i\int_{\mathbb{R}^{1}}(\lambda-z)^{-2}\Im z\vd_{\varphi}\mu(\lambda),
	\end{equation}
	where $\Im z$ is the imaginary part of $z\in\mathbb{C}$ and $\vd_{\varphi}\mu(\cdot)$ is a Borel measure on $\mathbb{R}^{1}$. This shows that
	$$\Im\triangle{}_{\mathbf{H}/\mathbf{H}_{0}}(z)  = \int_{\mathbb{R}^{1}}(\lambda-z)^{-2}\Im z\vd_{\varphi}\mu(\lambda)$$
	and thus
	$$\frac{\Im\triangle{}_{\mathbf{H}/\mathbf{H}_{0}}(z)}{\Im z}  = \int_{\mathbb{R}^{1}}(\lambda-z)^{-2}\vd_{\varphi}\mu(\lambda)>0.$$
	Choosing from the branch of the function
	$$\ln \triangle_{\mathbf{H}/\mathbf{H}_{0}}(z)=\ln |\triangle_{\mathbf{H}/\mathbf{H}_{0}}(z)|+i\arg\triangle_{\mathbf{H}/\mathbf{H}_{0}}(z)$$
	with $0<\arg\triangle_{\mathbf{H}/\mathbf{H}_{0}}(z)<\pi$ it is also clear that $0<\frac{1}{\pi}\arg\triangle_{\mathbf{H}/\mathbf{H}_{0}}(z)<1$.
	By taking the natural logarithm of both sides of \eqref{KTF6}, one obtains
	\begin{equation}\label{KTF9}
	\ln\triangle_{\mathbf{H}/\mathbf{H}_{0}}(\lambda+i\epsilon)=\ln\left(1+i\frac{\gamma}{\epsilon}\right).
	\end{equation}
	We recall that
	\[\ln(1+y)=\int_{0}^{y}\frac{\vd t}{1+t}=\int_{0}^{y}\sum_{k=0}^{\infty}(-1)^{k}t^{k}\vd t=\sum_{k=0}^{\infty}(-1)^{k}\frac{t^{k+1}}{k+1}\bigg|_{0}^{y}\simeq y+o(y^{2})\]
	as $y\rightarrow 0$.
	Thus, expression~\eqref{KTF8} becomes
	\begin{equation}\label{KTF9b}
	\ln\triangle_{\mathbf{H}/\mathbf{H}_{0}}(\lambda+i\epsilon)\simeq i\frac{\gamma}{\epsilon}+o(\left(i\frac{\gamma}{\epsilon}\right)^{2}).
	\end{equation}
	The isolated part of $\ln\triangle_{\mathbf{H}/\mathbf{H}_{0}}(\lambda+i\epsilon)$ the expression~\eqref{KTF9} is the imaginary part as $\ln|\triangle_{\mathbf{H}/\mathbf{H}_{0}}(\lambda+i\epsilon)|\rightarrow 0$ as $i\frac{\gamma}{\epsilon}\rightarrow 0$.
	Therefore
	\begin{equation}\label{KTF10}
	0<\frac{1}{\pi}\arg\triangle_{\mathbf{H}/\mathbf{H}_{0}}(\lambda+i\epsilon)=\frac{1}{\pi}\Im\ln\triangle_{\mathbf{H}/\mathbf{H}_{0}}(\lambda+i\epsilon)=\frac{\gamma}{\epsilon}<1.
	\end{equation}
	Next, to obtain the Privalov's representation of SSF \eqref{KTF3}, we state the Steiltjes-Perron transform
	\begin{equation}\label{STF}
	\Phi(z)= \mathcal{S}_{z}[F]:=\int_{0}^{\infty}\frac{F(y)}{y+z}\vd y
	\end{equation}
	with its inverse transform
	\begin{equation}\label{STF2}
	F(y)=\lim_{\epsilon\rightarrow 0^{+}}\frac{1}{2\pi i}[\Phi(-y-i\epsilon)-\Phi(-y+i\epsilon)],\;\;\;\; y\in \Omega \subset \mathbb{R}^{1}, z\in\mathbb{C}\setminus\mathbb{R}
	\end{equation}
	(cf:~\cite{HSW} , p.247). Now let $$G(z)=\int_{\mathbb{R}^{1}}\frac{\xi(\lambda;\mathbf{H},\mathbf{H}_{0})\vd\lambda}{\lambda-z},\;\;\;\Im z=\epsilon >0.$$
	It follows by \eqref{STF} and \eqref{STF2} that
	\begin{eqnarray}\label{KTF11}
	\xi(\lambda;\mathbf{H},\mathbf{H}_{0})&=&\lim_{\epsilon\rightarrow 0^{+}}\frac{1}{2\pi i}[G(\lambda+i\epsilon)-G(\lambda-i\epsilon)].\nonumber \\
	&=& \lim_{\epsilon\rightarrow 0^{+}}\frac{1}{2\pi i}\cdotp \frac{2i}{\epsilon}\int_{\mathbb{R}^{1}}\xi(\lambda;\mathbf{H},\mathbf{H}_{0})\vd\lambda \nonumber\\
	&=&\frac{1}{\pi}\lim_{\epsilon\rightarrow 0^{+}}\int_{\mathbb{R}^{1}}\frac{\xi(\lambda;\mathbf{H},\mathbf{H}_{0})}{\epsilon}\vd\lambda\nonumber\\
	&=&\frac{1}{\pi}\lim_{\epsilon\downarrow 0}\arg\triangle_{\mathbf{H}/\mathbf{H}_{0}}(\lambda+i\epsilon).
	\end{eqnarray}
	By \eqref{KTF10} and \eqref{KTF11} it is obvious that $0<\xi(\lambda;\mathbf{H},\mathbf{H}_{0})<1$. It is well-known that if the operator $V=\mathbf{H}-\mathbf{H}_{0}$ has precisely $p$ positive eigen-values ($\nu$ negative eigen-values) then $\xi(\lambda;\mathbf{H},\mathbf{H}_{0})\leq p$ ($\xi(\lambda;\mathbf{H},\mathbf{H}_{0})\geq -\nu)$). This implies by the above inequality that $\nu=0$ and $p$. Let us suppose that $V$ is arbitrary trace class perturbation and $V_{n}$ be a sequence of finite rank operators, such that $\|V-V_{n}\|_{1}\rightarrow 0$ as $n\rightarrow \infty$. Then setting $$\xi(\lambda;\mathbf{H},\mathbf{H}_{0})=\sum_{m=1}^{\infty}\xi(\lambda;\mathbf{H}_{m},\mathbf{H}_{m-1}).$$
	The sum is finite only if rank$(V)=n$. Let this be the case, then $$\xi(\lambda;\mathbf{H},\mathbf{H}_{0})=\sum_{m=1}^{n}\xi(\lambda;\mathbf{H}_{m},\mathbf{H}_{m-1})$$
	and $|\xi(\lambda;\mathbf{H}_{m},\mathbf{H}_{m-1})|=|\gamma_{m}|\left(=\pm \mathrm{Tr}(\mathbf{H}_{m}-\mathbf{H}_{m-1})\right)$. Consequent to this equality, the series $\sum_{m=1}^{\infty}\xi(\lambda;\mathbf{H}_{m},\mathbf{H}_{m-1})$ converges absolutely in the metric $L^{1}(\mathbb{R})$ to some function $\xi(\lambda;\mathbf{H},\mathbf{H}_{0})$ and $$\lim_{n\rightarrow \infty}\int_{\mathbb{R}}\frac{\sum_{m=1}^{n}\xi(\lambda;\mathbf{H}_{m},\mathbf{H}_{m-1})}{\lambda-z}\vd\lambda=\int_{\mathbb{R}}\frac{\xi(\lambda;\mathbf{H},\mathbf{H}_{0})}{\lambda-z}\vd\lambda.$$
	Let $V_{n}=:\sum_{m=1}^{n}\gamma_{m}\langle\cdot,\varphi_{m}\rangle\varphi_{m}$. Then $$\triangle_{\mathbf{H}_{n}/\mathbf{H}_{0}}(z)=\det[I+V_{n}\mathcal{R}_{z}(\mathbf{H}_{0})].$$
	By strong convergence $$\|V-V_{n}\|_{1}=\|\sum_{m=n+1}^{\infty}\gamma_{m}\langle\cdot,\varphi_{m}\rangle\varphi_{m}\|_{1}=\sum_{m=n+1}^{\infty}|\gamma_{m}|\rightarrow 0$$
	as $n\rightarrow\infty$ and thus
	$$\triangle_{\mathbf{H}_{n}/\mathbf{H}_{0}}(z)\rightarrow\det[I+V\mathcal{R}_{z}(\mathbf{H}_{0})]=\triangle_{\mathbf{H}/\mathbf{H}_{0}}(z).$$
	It is therefore evident that
	$$\ln\triangle_{\mathbf{H}/\mathbf{H}_{0}}(z)=\int_{\mathbb{R}}\frac{\xi(\lambda;\mathbf{H},\mathbf{H}_{0})}{\lambda-z}\vd\lambda.$$
	Now, the $L^{1}-$norm of the function $\xi(\lambda)$
	\begin{eqnarray*}
		\|\xi(\lambda;\mathbf{H},\mathbf{H}_{0})\|_{1} &=& \int_{\mathbb{R}}|\xi(\lambda;\mathbf{H},\mathbf{H}_{0})|\vd\lambda \\
		&=& \lim_{n\rightarrow\infty}\int_{\mathbb{R}}|\sum_{m=1}^{n}\xi(\lambda;\mathbf{H}_{m},\mathbf{H}_{m-1})|\vd\lambda \\
		&\leq& \lim_{n\rightarrow\infty}\sum_{m=1}^{n}\int_{\mathbb{R}}|\xi(\lambda;\mathbf{H}_{m},\mathbf{H}_{m-1})|\vd\lambda  \\
		&=& \lim_{n\rightarrow\infty}|\gamma_{m}| \\
		&=& \|V\|_{1}.
	\end{eqnarray*}
	On the other hand,
	\begin{eqnarray*}
		\int_{\mathbb{R}^{1}}\xi(\lambda;\mathbf{H},\mathbf{H}_{0})\vd\lambda  &=& \int_{\mathbb{R}^{1}}\sum_{m=1}^{\infty}\xi(\lambda;\mathbf{H}_{m},\mathbf{H}_{m-1})\vd\lambda\\
		&=& \lim_{n\rightarrow\infty}\int_{\mathbb{R}^{1}}\sum_{m=1}^{n}\xi(\lambda;\mathbf{H}_{m},\mathbf{H}_{m-1})\vd\lambda \\
		&=& \sum_{m=1}^{\infty}\gamma_{m}\\
		&=& \mathrm{Tr}_{\mathscr{H}}(V).
	\end{eqnarray*}
	This completes the proof.\qedhere
\end{proof}
We remark here that $\mathscr{B}_{1}(\mathscr{H})\subsetneq \mathscr{B}_{2}(\mathscr{H}) $ and this was illustrated by a counter-example of Hilbert-Schmidt operators that are not trace-class operators due to Kre\u{i}n~\cite{MGK}. A typical example is found in the work of Sinha and Mohapatra (~\cite{SM}, p.820).
\section{Main Results }\label{sec5}
The Brioschi-Halphen equation (BHE) given in \eqref{bh} is a Fuchsian type equation obtained by a two-step transformation of the Lam\'e equation (~\cite{POO}, Chapter IX, \S37, Eq.(16), p.163). In this section, SSF associated with Lam\'e equation and Brioschi-Halphen equation are computed.  Our technique of solution is similar to that explained by Kanwal (see~\cite{KRP},  p.246) in connection with the fundamental solution of a second-order ordinary differential equation with polynomial coefficients. The difference between what follows and Kanwal's work is that our differential equation under consideration is defined in the complex domain and has coefficients that are elliptic functions. The Green function obtained by this procedure is used to calculate the spectral shift function associated with the differential operator of BHE.

\begin{teo}Consider the Lam\'e equation
\begin{equation}\label{Lam}
-y''(z)+2\wp(z)y(z)=By(z)
\end{equation}
satisfied by the function
$$f(z)=K_1\frac{\sigma(z+\varepsilon)}{\sigma(z)}\exp(-z\zeta(\varepsilon))+K_2\frac{\sigma(z-\varepsilon)}{\sigma(z)}\exp(z\zeta(\varepsilon)),$$
where $\sigma(\cdot)$ and $\zeta(\cdot)$ are Weierstrass elliptic sigma and zeta functions respectively and $K_1, K_2$ are arbitrary constants.
Then Green kernel which is associated with the equation~\eqref{Lam}
is given by
\begin{equation*}
\xi(\lambda,w)=\left\{\begin{array}{cc}
\infty & \text{if}\; w\in\mathbb{C}_{+} \\
&  \\
\frac{2\kappa\delta(w)+f(w)\delta'(z-w)}{4\pi}\lim_{\epsilon\downarrow 0} \tan^{-1}\left(-\frac{\epsilon}{\lambda}\right)&  \text{if}\; w\in\mathbb{C}_{-}.
\end{array}\right.
\end{equation*}
where $\kappa$ and $B=\wp(\varepsilon)$ are constants.
\end{teo}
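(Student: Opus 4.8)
The plan is to obtain the Green function of the Lam\'e operator $L_{1}=-\partial_{z}^{2}+2\wp(z)$ at the spectral value $B=\wp(\varepsilon)$ by the distributional construction of Kanwal~\cite{KRP} (adapted to the complex, elliptic-coefficient setting), and then to feed this kernel into Kre\u{\i}n's perturbation-determinant formalism \eqref{detf}--\eqref{KTF3} in order to read off the spectral shift function. As a first step I would fix a fundamental system for \eqref{Lam}. The two summands of $f$, namely
$$f_{1}(z)=\frac{\sigma(z+\varepsilon)}{\sigma(z)}e^{-z\zeta(\varepsilon)},\qquad f_{2}(z)=\frac{\sigma(z-\varepsilon)}{\sigma(z)}e^{\,z\zeta(\varepsilon)}=f_{1}(-z),$$
are linearly independent solutions of \eqref{Lam}; this is the classical Hermite ansatz, and a direct substitution using $\zeta'=-\wp$ and the addition formulae for the Weierstrass functions verifies it. Because the coefficient of $y''$ in \eqref{Lam} is the constant $-1$, Abel's identity forces the Wronskian $W(f_{1},f_{2})=f_{1}f_{2}'-f_{1}'f_{2}$ to be a constant, which I would identify with the $2\kappa$ of the statement (it is expressible through $\wp'(\varepsilon)$ and $\zeta(\varepsilon)$).

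Next I would assemble the Green kernel. Following Kanwal, seek $G(z,w)$ with $(-\partial_{z}^{2}+2\wp(z)-B)\,G(z,w)=\delta(z-w)$ in $\mathscr{D}'$, built from the Heaviside step and the bilinear combination of $f_{1},f_{2}$ fixed by the jump relation coming from the Wronskian. Differentiating the product of $\theta(z-w)$ with this smooth factor and applying the distributional identities $\theta'(z-w)=\delta(z-w)$ and $g(z,w)\,\delta'(z-w)=g(w,w)\,\delta'(z-w)-\partial_{z}g(w,w)\,\delta(z-w)$ produces a linear combination of $\delta(z-w)$ and $\delta'(z-w)$; tracking the coefficients — which bring in $f$ evaluated at $w$ and the constant $\kappa$ — reproduces the numerator $2\kappa\,\delta(w)+f(w)\,\delta'(z-w)$ in the theorem. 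Here the regularised products $f(w)\,\delta'(z-w)$ have to be read through the Gel'fand-triple action of the Preliminaries, since $\wp$ and $\sigma^{-1}$ carry poles at the excised points of $\Omega=\mathbb{CP}^{1}\setminus\{e_{i}\}$. I would then split on the location of $w$: for $w$ in the lower half-plane the exponential factors $e^{\mp z\zeta(\varepsilon)}$ and the resolvent parameter cooperate so that $G(\cdot,w)\in L^{2}(\Omega)$, whence by Theorem~\ref{HST} the associated integral operator is Hilbert--Schmidt and the manipulations are legitimate; for $w$ in the upper half-plane square-integrability over $\Omega$ fails and the kernel diverges, which is the source of the entry $\xi(\lambda,w)=\infty$.

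Finally I would turn the kernel into the SSF. With $\mathbf{H}=L_{1}$, $\mathbf{H}_{0}$ the potential-free reference operator and $V=\mathbf{H}-\mathbf{H}_{0}$ reduced to finite rank via the $\mathfrak{s}\ell(2,\mathbb{C})$ realisation \eqref{rep}--\eqref{det}, I would form the perturbation determinant \eqref{detf} and evaluate it along $z=\lambda+i\epsilon$ exactly as in \eqref{KTF6}--\eqref{KTF7}, the Green kernel above now playing the role of $\mathcal{R}_{z}(\mathbf{H}_{0})$ there; the rank-one reduction turns $\triangle_{\mathbf{H}/\mathbf{H}_{0}}(\lambda+i\epsilon)$ into a quantity whose argument is, up to a multiplicative constant, $\tan^{-1}\!\left(-\epsilon/\lambda\right)$. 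Privalov's representation \eqref{KTF3}, $\xi(\lambda)=\tfrac{1}{\pi}\lim_{\epsilon\downarrow0}\arg\triangle_{\mathbf{H}/\mathbf{H}_{0}}(\lambda+i\epsilon)$, then assembles the stated value $\tfrac{1}{4\pi}\bigl(2\kappa\,\delta(w)+f(w)\,\delta'(z-w)\bigr)\lim_{\epsilon\downarrow0}\tan^{-1}(-\epsilon/\lambda)$ on $\mathbb{C}_{-}$. I expect the main obstacle to be the rigour of the distributional products on the punctured sphere: justifying that $f(w)\,\delta'(z-w)$ and the Dunford--Taylor/Privalov contour integral defining $\arg\triangle$ converge over $\Omega=\mathbb{CP}^{1}\setminus\{e_{i}\}$ in spite of the double poles of $\wp$ and the zeros of $\sigma$ is precisely where the measure computation of Lemma~\ref{Lem2}, the reduction $\mathbb{S}^{1}\hookrightarrow\mathbb{CP}^{1}$, and the Hilbert--Schmidt criterion of Theorem~\ref{HST} have to be brought in, and it is also where the dichotomy between $\mathbb{C}_{+}$ and $\mathbb{C}_{-}$ genuinely originates.
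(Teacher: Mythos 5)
Your proposal diverges from the paper's argument at the two places that actually generate the stated formula, and at both places the divergence leaves a genuine gap. First, the paper does not build the kernel from a Wronskian jump condition on the fundamental pair $f_{1},f_{2}$; it posits the one-sided ansatz $G(z,w)=f(z)\mathbb{H}(z-w)$, pushes it through $-G_{zz}+2\wp(z)G=\kappa\delta(z-w)$, and solves \emph{algebraically} for $G$ as the distribution $\bigl(\kappa\delta(z-w)+f(z)\delta'(z-w)\bigr)/\wp(\varepsilon)$. The specific numerator $2\kappa\delta(w)+f(w)\delta'(z-w)$ and the denominator $4\pi$ in the statement are then produced by the diagonal evaluation $z=w$: on $\mathbb{C}_{+}$ this gives $\delta(0)=\infty$, which is the sole source of the entry $\infty$, while on $\mathbb{C}_{-}$ the argument becomes $2w$ and the scaling identities $\delta(2w)=\tfrac12\delta(w)$, $\delta'(2w)=\tfrac14\delta'(z-w)$ supply the factors $2$ and $4$. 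Your mechanism for the $\mathbb{C}_{\pm}$ dichotomy --- square-integrability of $G(\cdot,w)$ over $\Omega$ --- is not the one used and gives you no route to these coefficients; ``tracking the coefficients'' of the jump relation would produce the symmetric Sturm--Liouville kernel $f_{1}(z_{<})f_{2}(z_{>})/W$, not the combination of $\delta$ and $\delta'$ that the theorem asserts, and the $\kappa$ of the statement is the strength of the point source in \eqref{FE}, not the Wronskian constant you propose to identify it with.

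Second, the closing step in the paper is not Kre\u{\i}n's perturbation-determinant formalism at all: the SSF is read off directly from the Green kernel via $\xi(\lambda,w)=\tfrac{1}{\pi}\mathrm{Arg}\bigl(\lim_{\epsilon\downarrow 0}G(\lambda+i\epsilon;w)\bigr)$ after substituting $\wp(\varepsilon)=\lambda+i\epsilon$ in the denominator of \eqref{Grf1}, so that $\tan^{-1}(-\epsilon/\lambda)$ is precisely the argument of $(\lambda+i\epsilon)^{-1}$. Your route through $\triangle_{\mathbf{H}/\mathbf{H}_{0}}$ and a rank-one reduction via \eqref{KTF6}--\eqref{KTF7} would deliver a scalar argument, not a distribution-valued expression multiplied by $\lim_{\epsilon\downarrow 0}\tan^{-1}(-\epsilon/\lambda)$, so it cannot terminate in the stated formula. (To be fair, the paper's own manipulations --- assigning the value $\infty$ to $\delta(0)$, multiplying $f(w)$ against $\delta'$, and taking $\mathrm{Arg}$ of a distribution --- are formal rather than rigorous; but as written your proposal does not reconstruct the computation that actually yields the claimed expression.)
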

\begin{proof}
	To prove this result, we need to show that $G:=G(z,w)$ is a fundamental solution of the equation
	\begin{equation}\label{FE}
	-\frac{\partial^{2} G}{\partial z^{2}}+2\wp(z)G=\kappa\delta(z-w).
	\end{equation}
	To this end, let the solution of equation~\eqref{FE} be written in the form
	\begin{equation}\label{Gren}
	G(z,w)=f(z)\mathbb{H}(z-w)
	\end{equation}
	where $f(z)$ is an elliptic function satisfying the equation~\eqref{Lam}. Differentiating the expression~\eqref{Gren} twice with respect to $z$ yields
	$$G_{zz}=f''(z)\mathbb{H}(z-w)+f'(w)\delta(z-w)+f(w)\delta'(z-w),$$
	so that
	\begin{equation}\label{Gr}
	-f''(z)\mathbb{H}(z-w)-f'(w)\delta(z-w)-f(w)\delta'(z-w)+2\wp(z)f(z)\mathbb{H}(z-w)=\kappa\delta(z-w).
	\end{equation}
	Since $-f''(z)+2\wp(z)f(z)=\wp(\varepsilon)f(z)$, it follows from equation~\eqref{Gr} that
	$$ -f'(w)\delta(z-w)-f(w)\delta'(z-w)+\wp(\varepsilon)f(z)\mathbb{H}(z-w)=\kappa\delta(z-w),$$
	which may be re-written as
	$$-f'(w)\delta(z-w)+f'(w)\delta(z-w)-f(z)\delta'(z-w)+\wp(\varepsilon)f(z)\mathbb{H}(z-w)=\kappa\delta(z-w).$$
	The last equation gives
	$$G(z,w)=f(z)\mathbb{H}(z-w)=\frac{\kappa\delta(z-w)+f(z)\delta'(z-w)}{\wp(\varepsilon)}.$$
	For $w$ in the positive upper-half plane $(\mathbb{C}_{+})$ or the negative lower half-plane $(\mathbb{C}_{-})$ of the complex plane $\mathbb{C}$, we have
	\begin{equation}\label{Grf}
	G(z,w)=\left\{\begin{array}{cc}
	\frac{\kappa\delta(z-w)+f(z)\delta'(z-w)}{\wp(\varepsilon)} & \text{if}\; w\in\mathbb{C}_{+} \\
	&  \\
	\frac{\kappa\delta(z+w)+f(z)\delta'(z+w)}{\wp(\varepsilon)}&  \text{if}\; w\in\mathbb{C}_{-}.
	\end{array}\right.
	\end{equation}

In what follows, we take the appearance of $\wp(\varepsilon)$ on the righthand side of equation~\eqref{Grf} into consideration, and so write $G(\wp(\varepsilon); z,w)\equiv G(z,w)$. Let $z=w$, then,  $G(\wp(\varepsilon); w,w)=G(\wp(\varepsilon);w)$. Thus, following the Green function given in expression~\eqref{Grf}, we get
\begin{equation}\label{Grf1}
G(\wp(\varepsilon);w)=\left\{\begin{array}{cc}
\frac{\kappa\delta(0)+f(w)\delta'(0)}{\wp(\varepsilon)} & \text{if}\; w\in\mathbb{C}_{+} \\
&  \\
\frac{\kappa\delta(2w)+f(w)\delta'(2w)}{\wp(\varepsilon)}&  \text{if}\; w\in\mathbb{C}_{-}.
\end{array}\right.
\end{equation}
By general principles (\cite{GES}, Eq.(4.8), p.760), $\lim_{\epsilon\downarrow 0}G(\lambda+i\epsilon;w)$ exists for a.e. $\lambda\in \mathbb{R}$, the spectral shift function
$$\xi(\lambda,u)=\frac{1}{\pi}\mathrm{Arg}\left(\lim_{\epsilon\downarrow 0}G(\lambda+i\epsilon;u)\right).$$
Explicitly, this is obtained as
\begin{equation}\label{Grf1b}
\xi(\lambda,w)=\left\{\begin{array}{cc}
\frac{\kappa\delta(0)+f(w)\delta'(0)}{\pi}\lim_{\epsilon\downarrow 0} \tan^{-1}\left(-\frac{\epsilon}{\lambda}\right)& \text{if}\; w\in\mathbb{C}_{+} \\
&  \\
\frac{\kappa\delta(2w)+f(w)\delta'(2w)}{\pi}\lim_{\epsilon\downarrow 0} \tan^{-1}\left(-\frac{\epsilon}{\lambda}\right)&  \text{if}\; w\in\mathbb{C}_{-}.
\end{array}\right.
\end{equation}
Here the complex number $\wp(\varepsilon)=\lambda+i\epsilon, \mathrm{Arg}(\cdot)$ is the complex argument and the limit
$\lim_{\epsilon\downarrow 0} \tan^{-1}\left(-\frac{\epsilon}{\lambda}\right)=\pi$ or $2\pi$ in both cases. Thus, since $\delta(0)=\infty$ and $\delta(2w)=\frac{1}{2}\delta(w)$ and by adapting Kanwal's approach (\cite{KRP},\S3.1, Eq.(7), p.50) for any $w$ in open disc $D(z,r)$ with radius $r$ and center $z$,
\begin{equation}\label{grr}
  \delta'(g(z))=\frac{1}{|g'(w)|^{2}}\left\{\delta'(z-w)+\frac{g''(w)}{g'(w)}\delta(z-w)\right\}
\end{equation}
so that $\delta'(2w)=\frac{1}{4}\delta'(z-w)$ then the expression~\eqref{Grf1} simplifies as
\begin{equation}\label{Grf2}
\xi(\lambda,w)=\left\{\begin{array}{cc}
\infty & \text{if}\; w\in\mathbb{C}_{+} \\
&  \\
\frac{2\kappa\delta(w)+f(w)\delta'(z-w)}{4\pi}\lim_{\epsilon\downarrow 0} \tan^{-1}\left(-\frac{\epsilon}{\lambda}\right)&  \text{if}\; w\in\mathbb{C}_{-}.
\end{array}\right.
\end{equation}
Here $$\lim_{\epsilon\downarrow 0} \tan^{-1}\left(-\frac{\epsilon}{\lambda}\right)=\pi\mathbb{H}(\lambda)$$
is an Heisenberg distribution (see~\cite{KRP}, \S2.7, Example 4, p.45).
We also remark here that since $f(z)$ satisfies
equation~\eqref{Lam} it must be the well-known solution of Lam\'e equation given explicitly by
$$f(z)=K_1\frac{\sigma(z+\varepsilon)}{\sigma(z)}\exp(-z\zeta(\varepsilon))+K_2\frac{\sigma(z-\varepsilon)}{\sigma(z)}\exp(z\zeta(\varepsilon)),$$
where $\sigma(\cdot)$ and $\zeta(\cdot)$ are Weierstrass elliptic sigma and zeta functions respectively.\qedhere
\end{proof}

The next theorem shows the spectral shift function associated with BHE.

\begin{teo} Considering the operator in \eqref{bh} without the accessory parameter in the form
\begin{equation}\label{bh25a}
H_{s}=(4w^3-g_2w-g_3)D^2-(2s-1)(6w^2-\frac{1}{2}g_2)D+4s(2s-1)w,\;\;(D=\frac{\vd}{\vd w}),
\end{equation}
we construct a distributional problem with fundamental solution $\Xi_{p,s}$
\begin{equation}\label{bh25b}
	H_{s}\Xi_{p,s}=\delta(w).
\end{equation}
whose SSF is given as
\[\xi^{\pm}(\lambda,w)= G^{\pm}(w)\mathbb{H}(\lambda)\]
where,
\[G^{\pm}(w) = \frac{i}{a_{+}-a_{-}}\left(e^{i\pm a_{+}w}-e^{\pm i a_{-}w}\right)\Gamma_{p}(\pm w)+\frac{A[e^{\pm i a_{+}w}+e^{\pm ia_{-}w}]}{2\pi\sqrt{X_{1}^{2}+4X_{2}X_{0}}}
\] and $a_{\pm}\in \mathbb{C}_{\pm}$ (upper and lower complex half-plains) are determined.
\end{teo}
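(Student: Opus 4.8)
The plan is to adapt Kanwal's construction of fundamental solutions for linear ODEs with polynomial coefficients to the operator \eqref{bh25a}, written compactly as $H_s=X_2(w)D^2+X_1(w)D+X_0(w)$ with $X_2(w)=4w^3-g_2w-g_3$, $X_1(w)=-(2s-1)(6w^2-\tfrac12 g_2)$ and $X_0(w)=4s(2s-1)w$. In the spirit of the preceding theorem I would posit that the fundamental solution has the form $\Xi_{p,s}(w)=\Gamma_p(w)\,\mathbb{H}(\pm w)$, where $\Gamma_p$ solves the homogeneous Brioschi--Halphen equation $H_s\Gamma_p=0$ (the index $p$ labelling the branch) and $\mathbb{H}$ is the Heaviside distribution; the two sign choices produce the cases $w\in\mathbb{C}_+$ and $w\in\mathbb{C}_-$ of the statement. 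Substituting this ansatz into \eqref{bh25b} and differentiating as distributions, using the product rules $X_2(w)\delta(w)=X_2(0)\delta(w)$ and $X_2(w)\delta'(w)=X_2(0)\delta'(w)-X_2'(0)\delta(w)$ (and likewise for $X_1$), one collects the coefficients of $\mathbb{H}$, $\delta$ and $\delta'$ separately: the $\mathbb{H}$-term reproduces $H_s\Gamma_p=0$, while the $\delta$- and $\delta'$-terms furnish the matching conditions at $w=0$ that fix the two constants of integration.

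To make $\Gamma_p$ — and hence $G^\pm$ — explicit I would apply the Fourier transform of distributions to \eqref{bh25b}, with $\widehat{\delta}=1$, $\widehat{D\Xi}=i\zeta\widehat{\Xi}$ and $\widehat{w^k\Xi}=i^k\widehat{\Xi}^{(k)}$; this reduces $H_s\Xi_{p,s}=\delta$ to a transformed equation whose characteristic equation is the quadratic $X_2a^2+X_1a-X_0=0$ (the signs being those forced by the transform). Its roots $a_\pm=\dfrac{-X_1\pm\sqrt{X_1^2+4X_2X_0}}{2X_2}$ are the admissible ``wave numbers''; because $H_s$ is formally real while $\Xi_{p,s}$ lives on the complex $w$-plane, the two roots separate into the half-planes $\mathbb{C}_\pm$, which is exactly the dichotomy of the statement, and $a_+-a_-=\sqrt{X_1^2+4X_2X_0}/X_2$. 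The associated elementary solutions are $e^{\pm i a_+w}$ and $e^{\pm i a_-w}$.

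I would then assemble the Green kernel by variation of parameters on the pair $e^{\pm i a_+w},\,e^{\pm i a_-w}$: their Wronskian is proportional to $a_+-a_-$, so the unit jump in the first derivative (weighted by the leading coefficient) produces the prefactor $\dfrac{i}{a_+-a_-}$ on the ``difference'' piece $\big(e^{\pm i a_+w}-e^{\pm i a_-w}\big)\Gamma_p(\pm w)$, while the remaining datum at $w=0$ contributes the ``sum'' piece $\dfrac{A\,[e^{\pm i a_+w}+e^{\pm i a_-w}]}{2\pi\sqrt{X_1^2+4X_2X_0}}$, with $A$ the normalization constant; adding these gives $G^\pm(w)$. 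Finally, exactly as in the previous theorem and following \cite{GES}, I would introduce the spectral variable through $B=\wp(\varepsilon)=\lambda+i\epsilon$, note that $\lim_{\epsilon\downarrow0}G^\pm(\lambda+i\epsilon;w)$ exists for a.e.\ $\lambda\in\mathbb{R}$, and apply the Kre\u{\i}n--Privalov representation $\xi^\pm(\lambda,w)=\tfrac1\pi\mathrm{Arg}\big(\lim_{\epsilon\downarrow0}G^\pm(\lambda+i\epsilon;w)\big)$; since that argument collapses to the Heaviside $\pi\mathbb{H}(\lambda)$, one gets $\xi^\pm(\lambda,w)=G^\pm(w)\mathbb{H}(\lambda)$.

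The main obstacle is that the homogeneous Brioschi--Halphen equation is genuinely Fuchsian, with coefficients built from the curve $4w^3-g_2w-g_3$, so it carries no true exponential solutions; the exponential form $e^{\pm i a_\pm w}$ surfaces only after the Fourier reduction (or an equivalent localization of the coefficients), and one must be scrupulous both in tracking the distributional products that generate the jump conditions and in justifying the boundary-value limit $\epsilon\downarrow 0$. Fixing the branch of $\sqrt{X_1^2+4X_2X_0}$ and of $\mathrm{Arg}$, and deciding which root $a_\pm$ lands in which half-plane, is the other point that needs care.
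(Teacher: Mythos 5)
There is a genuine gap, and it sits exactly at the step you yourself flag as ``the main obstacle.'' Your plan identifies $X_2,X_1,X_0$ with the \emph{variable} polynomial coefficients $4w^3-g_2w-g_3$, $-(2s-1)(6w^2-\tfrac12 g_2)$, $4s(2s-1)w$ of $H_s$, and then treats $X_2a^2+X_1a-X_0=0$ as a constant-coefficient characteristic equation with roots $a_\pm$ and elementary solutions $e^{ia_\pm w}$. With $w$-dependent $X_i$ this equation is not a characteristic equation at all, and the Fuchsian operator has no exponential solutions, as you note. The entire content of the paper's proof is precisely the mechanism that removes this obstruction: a Fourier transform taken with respect to the weighted measure $\vd\mu_\omega(z)=\omega(z)\,\vd z$, $\omega(z)=\prod_k(z-e_k)^{\sigma_k-1}=\sum h_{mnq}z^{m+n+q-3}$, under which each monomial coefficient becomes a derivative $\partial_z^{(l+\cdot)}$ in the dual variable; after the truncation $l=m+n+q\in\{0,1,2\}$ and the tabulated simplification, the symbol $\mathscr{F}[H_s]$ collapses to a genuine quadratic polynomial $X_2z^2+iX_1z+X_0$ whose coefficients are \emph{constants} built from $g_2,g_3,e_k,\sigma_k,s$ (e.g.\ $X_2=g_3$), and whose roots $a_\pm=\tfrac{i}{2X_2}\bigl[-X_1\pm\sqrt{X_1^2+4X_2X_0}\bigr]$ are purely imaginary --- which is what places them in $\mathbb{C}_\pm$ and fixes the sign of $p=\Im a_\pm$. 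Your proposal assumes the output of this reduction without supplying it, so the existence of the exponentials $e^{ia_\pm w}$, the dichotomy $a_\pm\in\mathbb{C}_\pm$, and the constancy of $\sqrt{X_1^2+4X_2X_0}$ are all unproven in your argument. (You also drop the factor $i$ in $a_\pm$, which would make the roots real and force the $p=0$ signum branch of $\Gamma_p$.)

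Two secondary misreadings compound this. First, $\Gamma_p$ is not a solution of the homogeneous Brioschi--Halphen equation: in the paper it is the step/signum selector $\Gamma_p(t)\in\{u(t),-\tfrac12\mathrm{sgn}(t),-u(-t)\}$ coming from the tabulated inverse-transform identity $\mathscr{F}^{-1}[(z-a_\pm)^{-1}]=ie^{ia_\pm t}\Gamma_p(t)$, so your ansatz $\Xi_{p,s}=\Gamma_p(w)\mathbb{H}(\pm w)$ with $H_s\Gamma_p=0$ is a different (and unexecuted) construction. Second, the ``sum'' piece of $G^\pm$ does not arise from variation of parameters or a jump condition at $w=0$; it is the inverse transform of the complementary \emph{surface distribution} $\Xi_{c,s}=A\,\delta(\mathscr{F}[H_s])$, expanded via the composition rule for $\delta$ at the simple roots $a_\pm$, which is where the normalizing factor $\bigl(2\pi\sqrt{X_1^2+4X_2X_0}\bigr)^{-1}$ comes from. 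Your final step (the boundary-value limit and the Kre\u{\i}n--Privalov argument giving $\xi^\pm=G^\pm\mathbb{H}(\lambda)$) does match the paper, but it rests on a Green function you have not actually derived.
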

\begin{proof}
The Fourier transform is very essential in the resolution of Green functions of differential equations. Let the Fourier transform in this case be defined as follows. Consider the Hilbert space $\mathfrak{H}=L^2(\Gamma,\vd\mu_{\omega}(z)), \Gamma\subset\mathbb{C}$ and its Fourier transformed space $\widetilde{\mathfrak{H}}=L^2(\widetilde{\Gamma},\vd\mu_{\omega}(w)), \widetilde{\Gamma}\subset\widehat{\mathbb{C}}$ where $\vd\mu_\omega(z)=\omega(z)\vd z$ is a weighted measure with weight $$\omega(z)=\prod_{k=1}^{3}(z-e_k)^{\sigma_k-1}=\sum_{m=0}^{\sigma_1-1}\sum_{n=0}^{\sigma_2-1}\sum_{q=0}^{\sigma_3-1}h_{mnq}z^{m+n+q-3}$$ associated with $H_s$.
Here, $$h_{mnq}=\binom{\sigma_1-1}{m}\binom{\sigma_2-1}{n}\binom{\sigma_3-1}{q}e_{1}^{m}e_2^{n}e_3^{q}.$$
The Fourier transform (isometry) $\mathscr{F}:\mathfrak{H}\rightarrow\widetilde{\mathfrak{H}}$ is defined by
$$F(z)=\mathscr{F}[f(w)]=\frac{1}{2\pi}\int_{\Gamma}f(z)e^{-i w\cdot z}\vd\mu_{\omega}(z)$$
(Some references use $izw$ instead of $-i w\cdot z ,\;i=\sqrt{-1},z\in\mathbb{C}, \widehat{\mathbb{C}}$ is the dual of $\mathbb{C}$). Here, $\widehat{\mathbb{C}}$ is defined as
$$\widehat{\mathbb{C}}:=\{1,i,\omega, i\omega|\omega^2=0=i^2\omega^2, i\omega\cdot\omega=0=\omega\cdot i\omega\}.$$
Let $l\equiv l_{mnq}=m+n+q, \partial_{z}=\frac{\vd}{\vd z}$. Applying the Fourier transform $\mathscr{F}$ to equation~\eqref{bh25b} and thus obtain
\begin{equation}\label{bh26}
	\mathscr{F}[H_s]\widehat{\Xi_{p,s}}=1
\end{equation}
where
\begin{multline}\label{bh27a}
\mathscr{F}[H_s]=\sum_{m=0}^{\sigma_1-1}\sum_{n=0}^{\sigma_2-1}\sum_{q=0}^{\sigma_3-1}h_{mnq}\bigg\{	\big[(-i)^{l+5}4\partial_{z}^{(l+3)}+(-i)^{l+3}g_2\partial_{z}^{(l+1)}-(-i)^{l+2}g_3\partial_{z}^{(l)}\big]z^2\\
	-(2s-1)\big[6(-i)^{l+4}\partial_{z}^{(l+2)}-\frac{g_2}{2}(-i)^{l+1}\partial_{z}^{(l)}\big]z-4s(2s-1)(-i)^{l+2}\partial_{z}^{(l+1)}z\bigg\}.
\end{multline}
Equation~\eqref{bh27a} further simplifies as
\begin{multline}\label{bh27b}
\mathscr{F}[H_s]=\sum_{m=0}^{\sigma_1-1}\sum_{n=0}^{\sigma_2-1}\sum_{q=0}^{\sigma_3-1}h_{mnq}\bigg\{	\big[-i(-i)^{l}4\partial_{z}^{(l+3)}+i(-i)^{l}g_2\partial_{z}^{(l+1)}+(-i)^{l}g_3\partial_{z}^{(l)}\big]z^2\\
	-(2s-1)\big[6(-i)^{l}\partial_{z}^{(l+2)}-\frac{g_2}{2}(-i)^{l+1}\partial_{z}^{(l)}\big]z+4s(2s-1)(-i)^{l}\partial_{z}^{(l+1)}z\bigg\}.
\end{multline}
 For non-trivial value of $\mathscr{F}[H_s],$ we set $l\in\{0,1,2\}$.
  \begin{table}
    \centering
    \begin{tabular}{ccc}
     \Xhline{2.5\arrayrulewidth}
     $mnq$ & $h_{mnq}$ & $\psi_{mnq}^{s}$ \\
     \Xhline{2.5\arrayrulewidth}
     $000$ & 1 & $g_{3}z^2+\frac{1}{2}(3-2s)ig_{2}z+4s(2s-1)$ \\

     $100$ & $(\sigma_{1}-1)e_{1}$ & $\frac{1}{2}(5-2s)g_{2}-2ig_{3}z$ \\

     $010$ & $(\sigma_{2}-1)e_{2}$  & $\frac{1}{2}(5-2s)g_{2}-2ig_{3}z$ \\

     $001$ & $(\sigma_{3}-1)e_{3}$ & $\frac{1}{2}(5-2s)g_{2}-2ig_{3}z$ \\

     $110$& $(\sigma_{1}-1)(\sigma_{2}-1)e_{1}e_{2}$&$-2g_{3}$\\

     $101$& $(\sigma_{1}-1)(\sigma_{3}-1)e_{1}e_{3}$&$-2g_{3}$\\

     $011$&$(\sigma_{2}-1)(\sigma_{3}-1)e_{2}e_{3}$&$-2g_{3}$\\

     $200$ & $\frac{(\sigma_{1}-1)(\sigma_1-2)}{2}e_1^{2}$ & $-2g_{3}$ \\

     $020$ & $\frac{(\sigma_{2}-1)(\sigma_{2}-2)}{2}e_2^{2}$ & $-2g_{3}$ \\

     $002$ & $\frac{(\sigma_{3}-1)(\sigma_{3}-2)}{2}e_3^{2}$ & $-2g_{3}$ \\
    \Xhline{2.5\arrayrulewidth}
   \end{tabular}
    \caption{Entries of $\mathscr{F}[H_{s}]$}\label{tabhc}
  \end{table}
  Let $\mathscr{F}[H_s]$ be written in the form
 $$\mathscr{F}[H_s]=\sum_{m=0}^{\sigma_1-1}\sum_{n=0}^{\sigma_2-1}\sum_{q=0}^{\sigma_3-1}h_{mnq}\psi_{mnq}^{s}(z,\partial_{z}).$$
 Then the simplify expression of $\mathscr{F}[H_s]$ is given by
 \begin{multline}\label{fhs}
   \mathscr{F}[H_s]=h_{000}\psi_{000}^{s}+\left(h_{100}+h_{010}+h_{001}\right)\psi_{100}^{s} \\
   +\left(h_{110}+h_{101}+h_{011}+h_{200}+h_{020}+h_{002}\right)\psi_{200}^{s} .
 \end{multline}
 Explicitly using the entries in Table~\ref{tabhc}, equation~\eqref{fhs} becomes
 \begin{multline}\label{fhs1}
    \mathscr{F}[H_s]=g_{3}z^{2}+i(3-2s)\frac{g_{3}}{2}z+4s(2s-1)+(\sigma_{1}e_{1}+\sigma_{2}e_{2}+\sigma_{3}e_{3})\big((5-2s)\frac{g_{2}}{2}-2ig_{3}z\big)\\
   - 2\bigg((\sigma_{1}-1)(\sigma_{2}-1)e_{1}e_{2}+(\sigma_{2}-1)(\sigma_{3}-1)e_{2}e_{3}+(\sigma_{1}-1)(\sigma_{3}-1)e_{1}e_{3}\\
   +\frac{(\sigma_{1}-1)(\sigma_1-2)}{2}e_{1}^{2}+ \frac{(\sigma_{2}-1)(\sigma_{2}-2)}{2}e_2^{2}+\frac{(\sigma_{3}-1)(\sigma_{3}-2)}{2}e_3^{2}\bigg)g_{3}.
 \end{multline}
 Concisely,
 $$\mathscr{F}[H_{s}]=X_{2}z^{2}+iX_{1}z+X_{0}$$
 where
 $$X_{2}=g_{3}, X_{1}=\left[(3-2s)\frac{g_{3}}{2}-2(\sigma_{1}e_{1}+\sigma_{2}e_{2}+\sigma_{3}e_{3})g_{3}\right],$$
 \begin{multline}\label{fhs2}
 X_{0}=(\sigma_{1}e_{1}+\sigma_{2}e_{2}+\sigma_{3}e_{3})(5-2s)\frac{g_{2}}{2}
   - 2\bigg((\sigma_{1}-1)(\sigma_{2}-1)e_{1}e_{2}\\ +(\sigma_{2}-1)(\sigma_{3}-1)e_{2}e_{3}+(\sigma_{1}-1)(\sigma_{3}-1)e_{1}e_{3}
   +\frac{(\sigma_{1}-1)(\sigma_1-2)}{2}e_{1}^{2}\\+ \frac{(\sigma_{2}-1)(\sigma_{2}-2)}{2}e_2^{2}+\frac{(\sigma_{3}-1)(\sigma_{3}-2)}{2}e_3^{2}\bigg)g_{3}.
 \end{multline}
 Thus,
 \begin{equation}\label{bh29a}
   \widehat{\Xi_{p,s}}=\frac{1}{X_{2}z^{2}+X_{1}z+X_{0}}=\frac{1}{(z-a_{+})(z-a_{-})}\equiv\frac{1}{a_{+}-a_{-}}\left(\frac{1}{z-a_{+}}-\frac{1}{z-a_{-}}\right).
 \end{equation}
  where
  $$a_{\pm}=\frac{-iX_{1}\pm\sqrt{-X_{1}^{2}-4X_{2}X_{0}}}{2X_{2}}=\frac{i[-X_{1}\pm\sqrt{X_{1}^{2}+4X_{2}X_{0}}]}{2X_{2}}.$$
  In this case, the discriminant  $X_{1}^{2}+4X_{2}X_{0}>0$, thus $a_{\pm}$ is purely imaginary. By linearity of $\mathscr{F}^{-1}$
\begin{equation}\label{fhs3}
  \Xi_{p,s}=\frac{1}{a_{+}-a_{-}}\left(\mathscr{F}^{-1}\bigg[\frac{1}{z-a_{+}}\bigg]-\mathscr{F}^{-1}\bigg[\frac{1}{z-a_{-}}\bigg]\right).
\end{equation}
Considering Signum function, $\mathrm{sgn}(t)=\left\{\begin{array}{ccc}
                                                       1, & \textrm{if}& t>0 \\
                                                       0, & \textrm{if} &t=0 \\
                                                       -1, & \textrm{if} & t<0
                                                     \end{array}\right.
$ and step function \[u(t)=\frac{1}{2}+\frac{1}{2}\mathrm{sgn}(t)=\left\{\begin{array}{ccc}
                                                                          1, & \textrm{if} & t>0 \\
                                                                          0, & \textrm{if} & t<0
                                                                        \end{array}\right.
\] (see~\cite{ADP}, Table~1.2, p.1-3), the identity
\begin{equation}\label{fhs4}
  \mathscr{F}^{-1}\left[\frac{1}{(z-a_{\pm})^{m}}\right]\bigg|_{t}=i\frac{(it)^{m-1}}{(m-1)!}e^{ia_{\pm}t}\Gamma_{p}(t), \;\textrm{(cf:~\cite{ADP}, \S2.118, p.2-29)}
\end{equation}
where $p$ is the imaginary part of $a_{\pm}$ and
\begin{equation}\label{fhs5}
  \Gamma_{p}(t)=\left\{\begin{array}{ccc}
                         u(t), & \textrm{ if }& p>0 \\
                         -\frac{1}{2}\textrm{sgn}(t), & \textrm{ if } &p=0 \\
                         -u(-t), & \textrm{ if }& p<0.
                       \end{array}\right.
\end{equation}
Here, the step function $u(t)$ is the same as the Heaviside function $\mathbb{H}(t).$ Applying the identities in \eqref{fhs4} and \eqref{fhs5} to \eqref{fhs3} one obtains
\begin{equation}\label{fhs6}
  \Xi_{p,s}=\frac{i}{a_{+}-a_{-}}\left(e^{ia_{+}w}-e^{ia_{-}w}\right)\Gamma_{p}(w).
\end{equation}
The complementary solution $\Xi_{c,s}$ of equation~\eqref{bh26} satisfies the homogeneous equation
$$\mathscr{F}[H_s]\widehat{\Xi_{c,s}}=0$$
is given by the surface distribution $\Xi_{c,s}=A\delta\left(\mathscr{F}[H_s]\right)$
(cf:~\cite{KRP},~\S6.4, Eq.(80)-Eq.(82),~p.165).
Considering the fact that $a_{\pm}$ are the roots of $\mathscr{F}[H_s]$ (\cite{KRP}, \S3.1, Eq.(2), p.53) it is obvious that
\begin{equation}\label{grr2}
  \delta(\mathscr{F}[H_s])=\frac{\delta(z-a_{+})}{|\partial_{z}\mathscr{F}[H_s]|\big|_{a_{+}}}+\frac{\delta(z-a_{-})}{|\partial_{z}\mathscr{F}[H_s]|\big|_{a_{-}}}.
\end{equation}
 The Green function associated with BHE is
\begin{eqnarray*}
  G(z,w)&=&\Xi_{c,s}+\Xi_{p,s}\\
   &=& \frac{i}{a_{+}-a_{-}}\left(e^{ia_{+}w}-e^{ia_{-}w}\right)\Gamma_{p}(t)+A\mathscr{F}^{-1}\bigg\{\frac{\delta(z-a_{+})}{|2X_{2}a_{+}+iX_{1}|}+\frac{\delta(z-a_{-})}{|2X_{2}a_{-}+iX_{1}|}\bigg\} \\   &=&\frac{i}{a_{+}-a_{-}}\left(e^{ia_{+}w}-e^{ia_{-}w}\right)\Gamma_{p}(w)+A\frac{e^{ia_{+}w}}{2\pi}\ast\mathscr{F}^{-1}\bigg\{\frac{1}{|2X_{2}a_{+}+iX_{1}|}\bigg\}\\
   &&+A\frac{e^{ia_{-}w}}{2\pi}\ast\mathscr{F}^{-1}\bigg\{\frac{1}{|2X_{2}a_{-}+iX_{1}|}\bigg\}\\
   &=&\frac{i}{a_{+}-a_{-}}\left(e^{ia_{+}w}-e^{ia_{-}w}\right)\Gamma_{p}(w)+A\frac{e^{ia_{+}w}}{2\pi\sqrt{X_{1}^{2}+4X_{2}X_{0}}}\ast\mathscr{F}^{-1}[1]\\
   &&+A\frac{e^{ia_{-}w}}{2\pi\sqrt{X_{1}^{2}+4X_{2}X_{0}}}\ast\mathscr{F}^{-1}[1]
   \end{eqnarray*}
   \begin{eqnarray*}
   &=&\frac{i}{a_{+}-a_{-}}\left(e^{ia_{+}w}-e^{ia_{-}w}\right)\Gamma_{p}(w)+A\frac{e^{ia_{+}w}+e^{ia_{-}w}}{2\pi\sqrt{X_{1}^{2}+4X_{2}X_{0}}}\ast\mathscr{F}^{-1}[1]\\
   &=&\frac{i}{a_{+}-a_{-}}\left(e^{ia_{+}w}-e^{ia_{-}w}\right)\Gamma_{p}(w)+A\frac{e^{ia_{+}w}+e^{ia_{-}w}}{2\pi\sqrt{X_{1}^{2}+4X_{2}X_{0}}}\ast\delta(w)\\
      &=&\frac{i}{a_{+}-a_{-}}\left(e^{ia_{+}w}-e^{ia_{-}w}\right)\Gamma_{p}(w)\\
      &&\;\;\;\;+\frac{A}{2\pi\sqrt{X_{1}^{2}+4X_{2}X_{0}}} \int_{-i\infty}^{+i\infty}[e^{ia_{+}w}+e^{ia_{-}w}]\delta(w-z)\vd z\\
     &=&\frac{i}{a_{+}-a_{-}}\left(e^{ia_{+}w}-e^{ia_{-}w}\right)\Gamma_{p}(w)+\frac{A[e^{ia_{+}z}+e^{ia_{-}z}]}{2\pi\sqrt{X_{1}^{2}+4X_{2}X_{0}}}.\\
   \end{eqnarray*}
By setting $z=\pm w\in\Omega_{\pm}$ (in the upper and lower complex half-plane) Green function exists and can be given by
\begin{equation}\label{grr4}
  G^{\pm}(w) = \frac{i}{a_{+}-a_{-}}\left(e^{i\pm a_{+}w}-e^{\pm i a_{-}w}\right)\Gamma_{p}(\pm w)+\frac{A[e^{\pm i a_{+}w}+e^{\pm ia_{-}w}]}{2\pi\sqrt{X_{1}^{2}+4X_{2}X_{0}}}.
\end{equation}
Let $4B=\lambda+i\varepsilon\in\mathbb{C}\setminus\{0\}$ be the eigenvalue of Brioschi-Halphen operator $H_{s}$ in terms of the Lam\'e accessory parameter $B$. Then, by Kanwal (cf:~\cite{KRP},\S 2.7, p.45), for any $\varepsilon\in\mathbb{R}^{+},$
$$\lim_{\varepsilon\downarrow 0}\arg(\lambda+i\varepsilon)=\pi\mathbb{H}(-\lambda).$$
It is important to note here that $$\arg(z)=\mathrm{Arg}(z)+2\pi n, n\in\mathbb{Z},$$ where $-\pi<\mathrm{Arg}(z)\leq\pi.$
The SSF following Gesztesy (\cite{GES}, Eq.(4.8), p.760), $\lim_{\epsilon\downarrow 0}G(\lambda+i\varepsilon;w)$ exists for a.e. $\lambda\in \mathbb{R}$, the spectral shift function
\begin{equation*}
\xi^{\pm}(\lambda,w)=\frac{1}{\pi}\mathrm{Arg}\left(\lim_{\varepsilon\downarrow 0}G(\lambda+i\varepsilon;w)\right)= G^{\pm}(w)\mathbb{H}(\lambda).
\end{equation*}
\end{proof}
\section{Conclusion}\label{conc}
In this work, we have reviewed the proof of a Theorem given by M.G. Kre\u{\i}n on SSFs. The spectral shift function for the Lam\'e equation in the Weierstrass form and the Brioschi-Halphen equation have been calculated. The approach used has been made explicit for researchers to develop a framework for handling other Fuchsian differential equations of this class. This work has demystified the concept and applicability of SSFs to complex differential equations.
\section*{Declarations}
\begin{itemize}
\item\textbf{Compliance with Ethical Standards}: The authors adhered to all ethical standards for the publication of this paper.
\item\textbf{Author's Contribution}: U.S. Idiong reviewed the literature, computed the results and carried out the typesetting of this paper. U.N. Bassey proposed the problem of evaluating SSFs associated with differential operators that define Fuchsian equations.
  \item \textbf{Conflict of Interest}: The authors declare that there are no conflicts of interest in the publication of this paper.
  \item \textbf{Funding}: This research is not funded by any organisation.
 \item \textbf{Data Availability Statements}: Not applicable.
\item\textbf{Ethical Conduct}: This paper is not under consideration for publication in any other journal.
\item\textbf{Acknowledgement}: We acknowledge Fritz Gesztesy, Ram P. Kanwal and other authors whose works have served as a foundation on which these results have been built.
\end{itemize}

\end{document}